\theoremstyle{plain}
\newtheorem{theorem}{Theorem}[section]
\newtheorem{lemma}[theorem]{Lemma}
\newtheorem{proposition}[theorem]{Proposition}
\newtheorem{problem}[theorem]{Problem}
\newtheorem{corollary}[theorem]{Corollary}
\newtheorem{definition}[theorem]{Definition}
\theoremstyle{definition}
\newtheorem{example}[theorem]{Example}
\theoremstyle{remark}
\newtheorem{remark}[theorem]{Remark}
\newcommand{\reg}{\text{reg}}
\newcommand{\eps}{\varepsilon}
\newcommand{\tr}{\text{tr}}
\title{A Tutte polynomial for maps}
\author{Andrew Goodall\thanks{Charles University, Prague, Czech Republic. Email: \texttt{andrew@iuuk.mff.cuni.cz}. Supported by Project ERCCZ LL1201 Cores and the Czech Science Foundation, GA \v{C}R 16-19910S.} \and
Thomas Krajewski\thanks{Aix-Marseille Universit\'e, Universit\'e de Toulon, CNRS, CPT, Marseille, France}\thanks{Supported by the ANR grant ANR JCJC ``CombPhysMat2Tens"}
\and Guus Regts\thanks{University of Amsterdam,  Netherlands. Email: \texttt{guusregts@gmail.com}. Supported by the European Research Council under the European Union's Seventh Framework Programme (FP7/2007-2013) / ERC grant agreement n$\mbox{}^{\circ}$ 339109, and by a NWO Veni grant.}\and Llu\'is Vena\thanks{Charles University, Prague, Czech Republic.  Email: \texttt{lluis.vena@gmail.com}.  Supported by the Center of Excellence-Inst for Theor. Comp. Sci., Prague, P202/12/G061, and by Project ERCCZ LL1201 Cores.}}
\begin{document}
\maketitle
\begin{abstract}
We follow the example of Tutte in his construction of the dichromate of a graph (that is, the Tutte polynomial) as a unification of the chromatic polynomial and the flow polynomial in order to construct a new polynomial invariant of maps (graphs embedded in orientable surfaces). We call this the surface Tutte polynomial. The surface Tutte polynomial of a map contains the Las Vergnas polynomial, Bollob\'as-Riordan polynomial and Kruskhal polynomial as specializations.
By construction, the surface Tutte polynomial includes among its evaluations the number of local tensions and local flows taking values in any given finite group. Other evaluations include the number of quasi-forests. 
\end{abstract}

\tableofcontents

\section{Introduction}

Inspired by Tutte's construction of the dichromate of a graph, we construct a similarly defined polynomial invariant of maps (graphs embedded in an orientable surface), which we call the {\em surface Tutte polynomial}. For a plane map the surface Tutte polynomial is essentially the Tutte polynomial of the underlying planar graph; for non-plane maps it includes the Tutte polynomial as a specialization. Moreover the surface Tutte polynomial includes as a specialization the Las Vergnas polynomial, Bollob\'as-Riordan polynomial and Kruskhal polynomial of an graph embedded in an orientable surface.
The surface Tutte polynomial has evaluations that count local flows and tensions of a map taking values in a finite nonabelian group, comparable in this way to the Tutte polynomial, which has specializations counting abelian flows and tensions of a graph. We also give some other topologically significant evaluations of the surface Tutte polynomial, such as the number of quasi-forests.

\subsection{Colourings, tensions and flows of graphs}\label{sec:tensions_flows_graphs}
Tutte~\cite{T54, T04} defined the {\em dichromate} of a graph $\Gamma$ (later to become known as the Tutte polynomial) as a bivariate generalization of the chromatic polynomial of $\Gamma$ and the flow polynomial of $\Gamma$.   The Tutte polynomial can be defined more generally for matrices (not just adjacency matrices of graphs), and in greater generality for matroids.  
The reader is referred to~\cite{B93, W93, B98, W99, EMM11} for more on the Tutte polynomial.

The chromatic polynomial of a graph $\Gamma$ evaluated at a positive integer $n$ counts the number of proper colourings of $\Gamma$ using at most $n$ colours. 
The flow polynomial of $\Gamma$ evaluated at a positive integer $n$ counts the number of nowhere-zero $\mathbb Z_n$-flows of $\Gamma$ (assignments of non-zero elements of $\mathbb Z_n$ to the edges of $\Gamma$ with a fixed arbitrary orientation so that Kirchhoff's law is satisfied at each vertex). Proper colourings of $\Gamma$ can be described in terms of nowhere-zero $\mathbb Z_n$-tensions of $\Gamma$, which are assignments of non-zero elements of $\mathbb Z_n$ to the edges of $\Gamma$ with the property that for every closed walk in the oriented graph $\Gamma$ the sum of values assigned to the forward edges equals the sum of values on the backward~edges.


Tutte showed~\cite{T54} that flows and tensions of a graph $\Gamma$ using non-zero values from a finite additive abelian group $G$ of order $n$ are in bijective correspondence with nowhere-zero $\mathbb Z_n$-flows and nowhere-zero $\mathbb Z_n$-tensions of $\Gamma$,  counted by $|T(\Gamma;0,1-n)|$ and $|T(\Gamma;1-n,0)|$, respectively.  
While colourings and flows are dual notions for planar graphs -- the $G$-tensions of a plane graph correspond to the $G$-flows of the dual plane graph -- for general graphs duality resides at the level of the cycle and cocycle matroids of the graph: the module of $G$-tensions of a non-planar graph does not correspond to the module of $G$-flows of a graph. (This follows from Whitney's matroid characterization of planar graphs~\cite{W32a}.) 

\subsection{Nonabelian flows and tensions of maps}
In the combinatorial literature nonabelian flows seem only to have been considered by DeVos~\cite{D00}. 
There are two significant differences between the abelian and the nonabelian case for defining flows and tensions for a graph $\Gamma$, which are most easily illustrated by considering~flows. 

The first is that for an abelian group the Kirchhoff condition for a flow requires that the sum of values on incoming edges is equal to the sum of values on outgoing edges where elements can be added together in any order. However, for a nonabelian group the order in which the group elements are composed matters: the edges incident with a common vertex need to be ordered so as to ensure the flow condition is well-defined. This order on edges around a vertex only matters up to cyclic permutation. Therefore the first change going from abelian to nonabelian flows is to attach a cyclic order of edges incident with a common vertex. This is equivalent to specifying an embedding of the graph in an orientable surface. Here we are moving from graphs to~maps.

The second difference is that, for an abelian group, if the Kirchhoff condition for a flow is satisfied at each vertex then, for any 
cutset of edges (not necessarily defined by a single vertex), the flow values in one direction have the same sum as the flow values in the reverse direction. 
The same is not necessarily true for an assignment of values to edges from a nonabelian group: Kirchhoff's condition may be satisfied at each vertex, but there may be edge cutsets for which the product of values on outgoing edges does not equal that on incoming edges (no matter in which order the edges are taken).  
Indeed, it is not clear for a general graph $\Gamma$ how to define a cyclic order of edges in an arbitrary cutset using just the cyclic orders of edges around vertices given by an orientable embedding of $\Gamma$. 
Flows on maps are thus defined locally, in the sense 
that the Kirchhoff condition is satisfied at each vertex, but does not necessarily extend to other edge cutsets.  

The dual case to flows is that of tensions taking values in a nonabelian group. For a local tension, rather than requiring for every closed walk  that product of values taken in order around the walk, in which values on backward edges are inverted, is equal to the identity, we just require this condition for facial walks.  
For local tensions, the cyclic order of edges around a face is given by taking the edges in the order given by walking around it. In distinction to the case of cutsets, given a cycle of a graph there is a cyclic order of edges already determined by walking around it -- there need not be any embedding of the graph as a map. Thus it is possible to define a global tension of a graph taking values in a nonabelian group, in the sense that the product of values around any cycle must make the identity (values on backward edges are inverted). Just as for abelian groups, such global tensions correspond to vertex colourings of the graph. Local tensions do not have this correspondence, except in the case of planar graphs. However, the dual of a global nonabelian tension is not defined for graphs except in the case of planar graphs, whereas the dual of a local nonabelian tension of a map is a local nonabelian flow of a map.

\subsection{The surface Tutte polynomial}\label{sec:intro_surface_Tutte}
We now turn to the definition of the surface Tutte polynomial of a map.
For a map $M$ that is a 2-cell embedding of a graph $\Gamma$ in an orientable surface, we let $v(M)$, $e(M)$, $k(M)$ equal respectively the number of vertices, edges and connected components of $\Gamma$  (each connected component is embedded its own surface), $f(M)$ the sum of the number of faces in the embeddings of each component of $\Gamma$, and $g(M)$ the sum of the genera of the surfaces in which the connected components of $\Gamma$ are embedded.   
We define the {\em surface Tutte polynomial} of a map $M$ (see Definition~\ref{def:surface_tutte} below) to be the multivariate polynomial in variables $x,y,\mathbf x=(x_0,\dots, x_{g(M)}), \mathbf y=(y_0,\dots, y_{g(M)})$ given by 
$$\mathcal T(M;\mathbf x,\mathbf y)=\sum_{A\subseteq E}x^{e(M\!/\!A)-\!f(M\!/\!A)+\!k(M\!/\!A)}y^{e(M\!\backslash\! A^c)-\!v(M\!\backslash\! A^c)+\!k(M\!\backslash\! A^c)}\prod_{\stackrel{\mbox{\rm \tiny conn. cpts $M_i$}}{\mbox{\rm \tiny of $M/A$}}}\!\!x_{g(M_i)}\prod_{\stackrel{\mbox{\rm \tiny conn. cpts $M_j$}}{\mbox{\rm \tiny of $M\backslash A^c$}}}\!\!y_{g(M_j)},$$ 
where $A^c=E\setminus A$ for $A\subseteq E$ , $M/A$ is the map obtained by contracting the edges in $A$ and $M\backslash A^c$ is the map restricted to edges in $A$.  

We show in Section~\ref{sec:proofs} that the surface Tutte polynomial has evaluations counting nowhere-identity nonabelian local flows and tensions.
It moreover contains as a specialization other polynomial invariants that have been defined for maps, notably the Las Vergnas polynomial~\cite{LV78, LV80, EMM15}, Bollob\'as--Riordan polynomial~\cite{BR01, BR02} and the Krushkal polynomial~\cite{K11} (see Section~\ref{sec:relation} below). 
There are maps $M$ with different surface Tutte polynomials but equal Krushkal polynomials (see Section~\ref{sec:not_Kruskal} below). 
The fact that the surface Tutte polynomial has evaluations counting nowhere-identity nonabelian local flows and tensions recommends it as a natural translation of the Tutte polynomial for graphs (in its guise as the dichromate) to a Tutte polynomial for maps.

%

Setting $x_g=a^{g}$ and $y_g=b^{g}$ in $\mathcal T(M;\mathbf x,\mathbf y)$ for $g=0,1,\dots, g(M)$ gives the following quadrivariate polynomial specialization of the surface Tutte polynomial (Definition~\ref{def:S}):
$$\mathcal Q(M;x,y,a,b)=\sum_{A\subseteq E}x^{v(M\!/\!A)-k(M\!/\!A)+2g(M\!/\!A)}y^{e(M\!\backslash\! A^c)-\!v(M\!\backslash\! A^c)+\!k(M\!\backslash\! A^c)}a^{g(M\!/\!A)}b^{g(M\!\backslash\! A^c)}.$$
The polynomial $Q(M;x,y,a,b)$ may be defined not just for maps but for $\Delta$-matroids more generally. 

\begin{remark}\label{rmk:Tutte_V-function} The surface Tutte polynomial of a map is in an unbounded number of variables, just as is the case for
Tutte's  $V$-function~\cite{T47, T54} of a graph $\Gamma=(V,E)$. 
Tutte's universal $V$-function is a polynomial in a sequence of commuting indeterminates $\mathbf y=(y_0,y_1,\dots)$ 
defined by the subgraph expansion
$$V(\Gamma;\mathbf y)=\sum_{A\subseteq E}\quad\prod_{\stackrel{\mbox{\rm \tiny conn. cpts $C_i$}}{\mbox{\rm \tiny of $\Gamma\backslash A$}}}y_{n(C_i)},$$
where $n(C_i)=|E(C_i)|-|V(C_i)|+1$ is the nullity of the $i$th connected component $C_i$ of the subgraph $\Gamma\backslash A$ (in some arbitrary ordering of connected components).
Up to a prefactor, the Tutte polynomial is obtained from the universal $V$-function by the specialization $y_n=(x\!-\!1)(y\!-\!1)^n$.
 Tutte showed that the $V$-function is universal for graph invariants satisfying a deletion-contraction recurrence for non-loop edges, multiplicative over disjoint unions, and specified by boundary values on graphs all the edges of which are loops.
Examples of $V$-functions that are not a specialization of the Tutte polynomial have not been so widely studied, but see for example~\cite{WF11}.
\end{remark}

\subsection{Organization of the paper}
We begin in Section~\ref{sec:tutte_graphs} by recalling the definition of the Tutte polynomial of a graph and its specializations to the chromatic polynomial and the flow polynomial. In Section~\ref{sec:tutte_maps} we give the relevant background to orientably embedded graphs (maps). We then formally define the surface Tutte polynomial of a map and state our main results. We derive some basic properties of the surface Tutte polynomial and show how it specializes to the Bollob\'as--Riordan polynomial, the Krushkal polynomial and the Las Vergas polynomial of a map, as well as to the Tutte polynomial of the underlying graph.
In Section~\ref{sec:spec} we state the evaluations of the surface Tutte polynomial that give the number of nowhere-identity local flows (or tensions) taking values in any given finite group, leaving proofs to Section~\ref{sec:proofs}. 
We then derive further specializations of the surface Tutte polynomial, such as the number of quasi-forests.
In Section~\ref{sec:proofs} we enumerate nonabelian local flows and tensions. Finally, in Section~\ref{sec:conclusions} we review our results in the context of other work and identify some directions for future research.

\section{The Tutte polynomial for graphs}\label{sec:tutte_graphs}


\subsection{Graphs}

A  graph $\Gamma=(V,E)$ is given by a set of vertices $V$ and a set of edges $E$, together with an incidence relation between vertices and edges such that any edge $e\in E$ is either incident to two different vertices $u,v\in V$ or is incident ``twice" to the same vertex $v\in V$. In the latter case $e$ is called a {\em loop}.  
If several edges are incident with the same pair of vertices $u,v$ then they are called {\em multiple edges}. 
The {\em degree} ${\rm deg}(v)$ of a vertex $v$ is the number of edges incident with it (any loop incident with the vertex is counted twice).


The graph $\Gamma\backslash e$ obtained from $\Gamma$ by {\em deletion} of $e$ is the graph $(V,E\setminus\{e\})$. 
The graph $\Gamma/e$ obtained from $\Gamma$ by {\em contraction} of $e$ is defined by first deleting $e$ and then identifying its endpoints. 
Contracting a loop of a graph coincides with deleting it.

\begin{definition}\label{def:graph_parameters}
For a graph $\Gamma$ we let $v(\Gamma)$, $e(\Gamma),$ $k(\Gamma)$ denote the number of vertices, edges and connected components of $\Gamma$. The {\em rank} of $\Gamma$ is defined by $$r(\Gamma)=v(\Gamma)-k(\Gamma),$$
and the {\em nullity} of $\Gamma$ by $$n(\Gamma)=e(\Gamma)-r(\Gamma)=e(\Gamma)-v(\Gamma)+k(\Gamma).$$
\end{definition}

An edge $e$ is a loop of $\Gamma$ precisely when $n(\Gamma/ e)=n(\Gamma)-1$; an edge $e$ is a {\em bridge} of  $\Gamma$ (deleting $e$ disconnects the connected component of $\Gamma$ to which it belongs) precisely when $r(\Gamma\backslash e)=r(\Gamma)-1$.

\subsection{The Tutte polynomial}\label{sec:Tutte_graph}

The Tutte polynomial of a graph $\Gamma=(V,E)$ is defined by the subgraph expansion
\begin{equation}\label{eq:tutte_e1}
T(\Gamma;x,y)=\sum_{A\subseteq E}(x-1)^{r(\Gamma)-r(\Gamma\backslash A^c)}(y-1)^{n(\Gamma\backslash A^c)},
\end{equation}
where $A^c=E\backslash A$ is the complement of $A\subseteq E$. 

The Tutte polynomial contains the chromatic polynomial as a specialization. In particular,
the number of nowhere-zero $\mathbb Z_n$ tensions of $\Gamma$ is given by $(-1)^{r(\Gamma)}T(\Gamma;1-n,0)$.
 
Dually, the flow polynomial $\phi(\Gamma;z)$ evaluated at $n\in\mathbb N$ is equal to the number of nowhere-zero $\mathbb Z_n$-flows of $\Gamma$ and is given by $\phi(\Gamma;z)=(-1)^{n(\Gamma)}T(\Gamma;0,1-z).$

\section{A Tutte polynomial for maps}\label{sec:tutte_maps}

\subsection{Graph embeddings and maps}

For embeddings of graphs in surfaces we follow \cite{LZ04}. See also~\cite{EMM13}.

A {\em surface} in this paper is a compact oriented two-dimensional topological manifold. Such orientable surfaces are classified by a nonnegative integer parameter, called the {\em genus} $g$ of the surface (the number of ``handles'', or ``doughnut holes''); thus the sphere has genus $0$ and the torus genus $1$.
Surfaces are not only orientable but have been given a fixed orientation, which in particular allows one to distinguish left and right relative to a directed line.

\begin{definition}\label{def:map}
A \emph{connected map} $M$ is a graph $\Gamma$ embedded into a connected surface $\Sigma$ (that is, considered as a subset $\Gamma\subset \Sigma$) in such a way that
\begin{itemize}
\item[(1)] vertices are represented as distinct points in the surface
\item[(2)] edges are represented as continuous curves in the surface that intersect only at vertices 
\item[(3)] cutting the surface along the graph thus drawn, what remains, (that is, the set $\Sigma\setminus \Gamma$) is a disjoint union of connected components, called {\em faces}. Each face is homeomorphic to an open disk.
\end{itemize}
The graph $\Gamma$ is said to be the {\em underlying graph} of $M$.
\end{definition}
A map is also known as an {\em orientably embedded graph}, an {\em orientable ribbon graph}, 
a {\em graph with a rotation system} or {\em cyclic graph},
with the attendant variations in diagrammatic representation of a map. (See~\cite{EMM13} and the references therein.)

A contractible closed curve in a surface $\Sigma$ is one that can be continuously deformed (or contracted) in $\Sigma$ to a single point. A cycle $C$ of a graph $\Gamma$ embedded in $\Sigma$ is contractible in $\Sigma$ if the subgraph $(V(C), E(C))$ of $\Gamma$ forms a contractible closed curve in $\Sigma$. A region of a surface is a 2-cell if its boundary is a contractible cycle. An embedding of a graph $\Gamma$ into a surface $\Sigma$ subject to the condition (3) in Definition~\ref{def:map} (that each of the connected components of $\Sigma\setminus \Gamma$ is homeomorphic to an open disk) is called a {\em 2-cell embedding} of $\Gamma$ and has the property that each face is a 2-cell. 
See Figure~\ref{fig:2cellembed}. 
A graph  must be connected in order for it to have a 2-cell embedding.

\ifpdf

\begin{figure}[htp]
\centering
\includegraphics[scale=1.3]{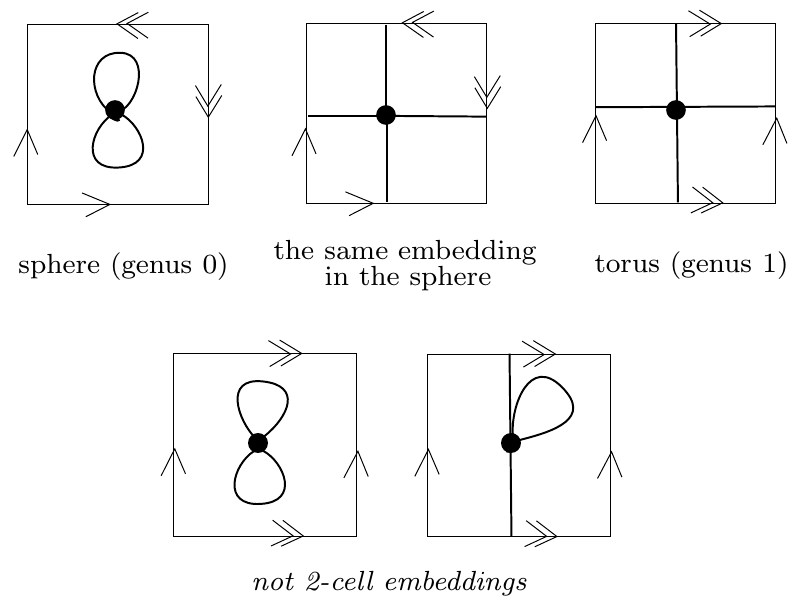}
\caption{Embeddings of the graph consisting of two loops on a single vertex, in the sphere and the torus. (Edges of the square with matching arrows are glued together.) The lower pair of embeddings are not 2-cell embeddings as in each of them one of the faces is not homeomorphic to an open disk. }\label{fig:2cellembed}
\end{figure}

\fi

\begin{definition}
	A {\em map} is a disjoint union of connected maps, one for each connected component of the underlying graph $\Gamma$: each connected component of $\Gamma$ is embedded as a connected map into its own surface. 
\end{definition}

\begin{definition}
Two maps $M_1\subset X_1$ and $M_2\subset X_2$, with underlying graphs $\Gamma_1$ and $\Gamma_2$, are {\em isomorphic} if there exists an orientation-preserving surface homeomorphism $u:X_1\to X_2$ such that the restriction of $u$ on $\Gamma_1$ is a graph isomorphism between the underlying graphs $\Gamma_1$ and $\Gamma_2$.
\end{definition}

A map parameter $P=P(M)$ is said to be a {\em map invariant} if $P(M_1)=P(M_2)$ whenever $M_1$ and $M_2$ are isomorphic.

\begin{definition}
The {\em genus} of a connected map $M$ is the genus of the connected surface $\Sigma$ in which it is embedded. 
 The genus of a graph is the minimum genus of an embedding of $\Gamma$ as a map. 
\end{definition}

In particular, graphs of genus $0$ are called {\em planar}, the maps witnessing this being {\em plane graphs} (or plane maps). Plane maps are viewed rather as embeddings of planar graphs in the sphere 
(the unbounded outer face in the plane becomes bounded once the plane has a point at infinity added to make it a sphere).
For a given graph $\Gamma$ in general there exist non-isomorphic maps, and these maps may be of various genera. For example, the tetrahedron $K_4$ is usually seen as a plane map, equal to the skeleton of the tetrahedral polytope in which every face is a triangle, but also has a genus $1$ embedding in which there are only two faces, one of degree $4$ and the other of degree $8$, and another genus $1$ embedding with one face of degree $3$ and one face of degree $9$. (The degree of a face is the number of adjacent edges to the face, one-faced edge being counted twice.)

To a map $M$ embedding a graph $\Gamma=(V,E)$ in a surface $\Sigma$ we identify the vertices and edges of $\Gamma$ with their representations in $\Sigma$ 
 and let $F$ be the set of faces of $M$. A face is identified with the subset of edges forming  its boundary.


For a connected map $M=(V,E,F)$ let $v(M)=|V|$, $e(M)=|E|$, $f(M)=|F|$, and let $\chi(M)=|V|-|E|+|F|$ be its Euler characteristic.
The well-known formula of Euler is that $\chi(M)=2-2g(M)$ for a connected map $M$, where $g(M)$ denotes the genus of $M$.

We extend these map parameters additively over disjoint unions: 

\begin{definition}
Let $M=M_1\sqcup \cdots \sqcup M_{k}$ be a map, equal to the disjoint union of connected maps $M_1,\dots, M_k$. 
We define $k(M)=k$ (the number of connected components of the underlying graph of $M$) and set 
$$v(M)=\sum_{i=1}^{k} v(M_i),\: e(M)=\sum_{i=1}^{k} e(M_i), \: f(M)=\sum_{i=1}^{k} f(M_i),$$
$$g(M)=\sum_{i=1}^{k} g(M_i),
\text{ and }\; \chi(M)=\sum_{i=1}^{k} \chi(M_i).
$$ 

The {\em rank} and {\em nullity} of $M$ are defined by
$$r(M)=v(M)-k(M), \quad n(M)=e(M)-v(M)+k(M),$$
and the {\em dual rank} and {\em dual nullity} by
$$r^*(M)=f(M)-k(M), \quad n^*(M)=e(M)-f(M)+k(M).$$
\end{definition}

The number of vertices, edges and connected components and the rank and nullity of a map are parameters shared with those of its underlying graph (Definition~\ref{def:graph_parameters}) and we use the same notation for them.

\begin{remark}
The number of faces $f(M)$ in a map is the sum of the number of faces in the embeddings of components of $M$ in disjoint surfaces: embedding a disconnected graph in one surface does not give a map (as there is a face not homeomorphic to an open disk). 
For a connected map $M$, $g(M)$ is the genus of the surface defined by the map, while for disconnected maps $g(M)$ is the genus of the surface in which all the components can be simultaneously embedded in which all but one face is a 2-cell (the face incident with each connected component is homeomorphic to a disk with $k(M)-1$ holes in its interior). 

\end{remark}
By applying Euler's formula for connected maps to each connected component of a disconnected map $M$ and using the additivity of the parameters $v,e,f$ and $g$ over disjoint unions we have 
\begin{equation} \label{eq:euler_rel}
\chi(M)=v(M)-e(M)+f(M)=2k(M)-2g(M).
\end{equation}
This in turn implies 
\begin{equation}\label{eq:dual_nullity}n^*(M)=r(M)+2g(M).\end{equation}

\begin{definition}\label{def:qt-bouq}
A map $M$ is a \emph{quasi-tree} if $f(M)=1$ and a \emph{bouquet} if $v(M)=1$.
\end{definition}

Given a map $M$, $M^{\ast}$ denotes its (surface) dual map, the vertices of which are the faces of $M$ and edges of $M^*$ join adjacent faces of $M$. For instance, if one edge $e$ is adjacent to only one face $f$, then the vertex corresponding to $f$ contains a loop corresponding to $e$.
The map $M^{\ast}$ lies in the same surface as $M$, so that $g(M^{\ast})=g(M)$. We have $r^*(M)=r(M^*)$ and $n^*(M)=n(M^*)$.

Given an orientably embedded connected graph, the orientation of the surface defines, for any vertex $v$, a cyclic rotation of the edges incident with $v$ (take the edges in the anticlockwise cyclic order with respect to the orientation of the surface). In order to distinguish both ends of the edges, we can assign an arbitrary direction to each undirected edge (loop or non-loop). Furthermore, this allows us to recover each face of the embedded graph by a sequence of the following two-step process starting with a vertex and an edge attached to that vertex: 
\begin{itemize}
\item move to the other end (the vertex attached to the other end of the edge)
\item select the previous edge in the anticlockwise cyclic order given by the orientation.
\end{itemize}
The process finishes when we are at the original vertex and we are about to repeat the same edge in the same direction.

 Conversely, every rotation scheme defines a unique 2-cell embedding of a connected graph on a closed oriented surface (up to isomorphism). For more on rotation systems see~\cite[page~36]{LZ04}.

Given a map $M=(V,E,F)$ and $e\in E$, the map obtained by {\em deleting} $e$ is denoted by $M\backslash e$ and the map obtained by {\em contracting} $e$ by $M/e$. For $A\subseteq E$ we let $M\backslash A$ ($M/A$) denote the map obtained by deleting (contracting) all the edges in $A$, the order in which  the edges in $A$ are taken being immaterial. A {\em submap} of $M$ is a map of the form $M\backslash A$ and shares the same vertex set as $M$.  The operations of deletion and contraction have standard definitions in terms of the ribbon graph representation of maps, see for example~\cite{CMNR16, EMM13}. Informally, the deletion of $e$ is defined by removing the two ends of the edge $e$ from the neighbourhoods of the vertices where $e$ is attached to, while maintaining the same rotation on the remaining edges of the vertex (moving to the next undeleted edge incident with the vertex in the original rotation). Contraction can be defined by duality: $(M/e)^{\ast}=M^{\ast}\setminus e^{\ast}$. 
We have $M\backslash e/f\cong M/f\backslash e$ for distinct edges $e$ and $f$. 

Deletion of an edge $e$ in a map $M$ corresponds to deleting the edge in its underlying graph $\Gamma$, although deletion of $e$ may reduce the genus of the map, in which case the underlying graph $\Gamma\backslash e$ of $M\backslash e$ is embedded in a different surface to $\Gamma$.  We have $v(M)=v(M\backslash e)=v(\Gamma\backslash e)=v(\Gamma)$, $e(M)-1=e(M\backslash e)=e(\Gamma\backslash e)=e(\Gamma)-1$ and $k(M\backslash e)=k(\Gamma\backslash e)$.
In particular,   
\begin{equation}\label{eq:mullity_del}n(M\backslash e)=n(\Gamma\backslash e).\end{equation}

Contraction of an edge in a map does not always correspond to contraction of the edge in the underlying graph. In particular, contracting a loop in a map has the effect of splitting in two the vertex with which it is incident. 



\begin{lemma}\label{lem:genus_submaps}
If $M$ is a map and $A$ a subset of edges then 
$$g(M\backslash A^c)+g(M/A)\leq g(M),$$
with equality if and only if
\[
k(M\backslash A^c) -k(M) - f(M\setminus A^c) +k(M/A)=0 \text{ and }k(M/A)-k(M)-v(M/A)+k(M\backslash A^c)=0.
\]
\end{lemma}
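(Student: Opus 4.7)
The plan is to apply Euler's formula $\chi(M) = 2k(M) - 2g(M)$ to each of the three maps in question and reduce the statement to an inequality about how contraction can split components. Rearranging Euler's formula as $g(M) = k(M) + \tfrac12(e(M) - v(M) - f(M))$, and similarly for $M/A$ and $M\backslash A^c$, the key will be to establish four elementary identities that hold for any map $M$ and any $A \subseteq E$: (i) $e(M) = e(M/A) + e(M\backslash A^c)$; (ii) $v(M\backslash A^c) = v(M)$, since deletion preserves vertices; (iii) $f(M/A) = f(M)$, proved by checking that contracting a single non-loop edge preserves the face structure and that contracting a loop splits its vertex without altering faces, then inducting on $|A|$; and (iv) $v(M/A) = f(M\backslash A^c)$, which follows by induction on $|A|$ from (iii) together with the commutativity $(M/e)\backslash A^c = (M\backslash A^c)/e$ for $e \in A$ (the base case $A = \emptyset$ giving isolated vertices, each of which is a one-face map).

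Substituting (i)--(iv) into the expansion of $g(M) - g(M/A) - g(M\backslash A^c)$, the cross terms collapse and one obtains
\begin{equation*}
g(M) - g(M/A) - g(M\backslash A^c) = f(M\backslash A^c) - k(M\backslash A^c) - \bigl(k(M/A) - k(M)\bigr).
\end{equation*}
The lemma therefore reduces to proving the purely combinatorial inequality
\begin{equation*}
k(M/A) - k(M) \leq f(M\backslash A^c) - k(M\backslash A^c),
\end{equation*}
together with identifying its equality case.

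To establish this inequality I would contract the connected components $C_1,\dots,C_r$ of $M\backslash A^c$ one at a time, where $r = k(M\backslash A^c)$. At the moment $C_i$ is contracted, its $v(C_i)$ vertices lie in a single connected component of the intermediate map because $C_i$ is connected, and they are replaced by exactly $f(C_i)$ new vertices (one per face of $C_i$, in the ribbon-graph picture of contraction). These $f(C_i)$ new vertices can split the host component into at most $f(C_i)$ pieces, so the number of connected components increases by at most $f(C_i) - 1$ at this step. Summing over $i$ gives $k(M/A) - k(M) \leq \sum_i (f(C_i) - 1) = f(M\backslash A^c) - k(M\backslash A^c)$, with equality iff every contraction step maximally splits its host component. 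This equality condition is precisely condition~(1) in the statement; using identity~(iv) it is equivalent to condition~(2), so the two bracketed equalities in the lemma are restatements of the same condition in terms of $f(M\backslash A^c)$ and $v(M/A)$ respectively. The main obstacle is the ribbon-graph bookkeeping needed to justify (iii), (iv) and the ``$f(C_i)$ new vertices, at most $f(C_i)$ pieces'' claim.
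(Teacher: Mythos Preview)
Your proof is correct and takes a genuinely different route from the paper's. Both arguments start from Euler's formula and arrive at
\[
2g(M)-2g(M\backslash A^c)-2g(M/A)=-\bigl[k(M\backslash A^c)-k(M)-f(M\backslash A^c)+k(M/A)\bigr]-\bigl[k(M/A)-k(M)-v(M/A)+k(M\backslash A^c)\bigr],
\]
but the paper then proves \emph{each} bracket nonpositive separately: for the second it picks a maximal spanning forest $F\subseteq A$ of $M\backslash A^c$, contracts $F$ first (where equality holds trivially), and then contracts the remaining edges of $A\setminus F$---which are loops in $M/F$---one at a time, tracking case by case how $k$ and $v$ change; the first bracket is then obtained by duality. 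Your approach is tighter in two ways. First, your identity~(iv), $v(M/A)=f(M\backslash A^c)$, shows that the two bracketed expressions are \emph{always equal}, so the ``and'' in the equality condition is redundant---something the paper does not observe. (Identity~(iv) is indeed true; it follows exactly as you sketch from~(iii), and (iii) is immediate from $(M/e)^*=M^*\backslash e$ together with~(ii).) Second, your component-by-component contraction, bounding the increase in components by $f(C_i)-1$ at each step because every component of the contracted piece must contain at least one of the $f(C_i)$ new vertices, is more transparent than the paper's spanning-forest-then-loops case analysis. The paper's approach, on the other hand, avoids relying on~(iv) and so keeps the ribbon-graph bookkeeping to a minimum, at the cost of the edge-by-edge induction.
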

\begin{proof} Using Euler's relation,
\begin{align*}
2g(M\backslash A^c)\!+\!2g(M\!/\!A) & = 2k(M\backslash A^c)\!+\!2k(M\! /\! A)\!-\!v(M\backslash A^c)\!-\!v(M\! /\! A)\!+\!e(M\backslash A^c)\!+\!e(M\! /\! A)\\
& \quad\quad\quad\quad\quad-f(M\backslash A^c)\!-\!f(M\! /\! A)\\
& = 2g(M)+[k(M\backslash A^c)\!\!-k(M)\!-\!f(M\setminus A^c)\!\!+k(M\! /\! A)]\!
\\
& \quad\quad\quad\quad\quad +\![k(M\! /\! A)\!-k(M)\!-\!v(M\! /\! A)+\!k(M\backslash A^c)]
\end{align*}
We now claim that
\begin{equation}
k(M\! /\! A)\!-k(M)\!-\!v(M\! /\! A)+\!k(M\backslash A^c)\leq 0.\label{eq:components}
\end{equation}

	
It suffices to prove that $k(M/A)\!-1\!\leq \!v(M/A)-\!k(M\backslash A^c)$ for a connected map $M$, since the map parameters $v(M)$ and $k(M)$ are additive over disjoint unions. 

Let $F\subseteq A$ be a maximal spanning forest of $M\backslash A^c$ (that is, $M\backslash F^c$ contains no cycles and for each $e\in A\backslash F$ the underlying graph of  $M\backslash (F\cup\{e\})^c$ contains a cycle). 
We have then $k(M\backslash F^c)=k(M\backslash A^c)$ and we now need to prove that $k(M/A)-1\leq v(M/A)-k(M\backslash F^c)$. 

Each edge in $F$ is a non-loop of $M$. Contracting a non-loop edge in $M$ corresponds to its contraction in the underlying graph $\Gamma$, and in particular preserves connectivity of $M$ (the edge is deleted, its endpoints $u$ and $v$ are fused into one vertex, whose incident edges are the other edges incident with $u$ and $v$ taken in the cyclic order inherited from the vertex rotations around $u$ and $v$).
We thus have $k(M/F)\!-\!1=1-\!1=0= \!v(M/F)-\!k(M\backslash F^c)$, as each connected component of $M\backslash F^c$ is reduced to a single vertex in~$M/F$. 
	
If $A=F$, we thus have equality in~\eqref{eq:components}. Suppose then that there is $e\in A\backslash F$. There is a unique cycle of $M\backslash A^c$ whose edges are contained in $F\cup\{e\}$. 
An edge $e\in A\backslash F$ is thus a loop of $M/F$  and when it is contracted its incident vertex $v$ splits into two new vertices $v_1$ and $v_2$. These vertices are either adjacent, in which case $v(M/(F\cup \{e\}))=v(M/F)+1$ while $k(M/F)=k(M/(F\cup\{e\}))$, or non-adjacent, in which case $v(M/(F\cup \{e\}))=v(M/F)+1$ and $k(M/(F\cup \{e\}))=k(M/F)+1$.  In both cases we have $k(M/(F\cup\{e\})\!-\!1\leq \!v(M/(F\cup\{e\}))-\!k(M\backslash F^c)$. 

It may be that, when contracting loop $e\in A\backslash F$ of $M/F$ on vertex $v$, a loop $e'\in A\backslash(F\cup\{e\})$ of $M/F$ becomes in $M/(F\cup\{e\})$ a non-loop edge joining $v_1$ and $v_2$. 
Then  $v(M/(F\cup \{e,e'\}))=v(M/(F\cup \{e\}))-1=v(M/F)$ while $k(M/(F\cup \{e,e'\}))=k(M/(F\cup \{e\}))=k(M/F)$. In this case $k(M/(F\cup\{e,e'\})\!-\!1\leq \!v(M/(F\cup\{e,e'\}))-\!k(M\backslash F^c)$, and again the desired inequality holds. 
 Any other edges $e''\in A\backslash (F\cup\{e,e'\})$ that are loops in $M/F$ are loops in $M/(F\cup\{e,e'\})$.
 
We may now repeat the argument: having contracted edges $B\subseteq A\backslash F$ to leave just loops, preserving the desired inequality $k(M/(F\cup B))\!-\!1\leq\!v(M/(F\cup B))-\!k(M\backslash F^c)$, choose an edge $e\in A\backslash (F\cup B)$ that is a loop of $M/(F\cup B)$. Eventually all the edges of $A$ are contracted and the inequality~\eqref{eq:components} holds.

By duality we immediately obtain $k(M\backslash A^c)\!\!-k(M)\!-\!f(M\setminus A^c)\!+\!k(M/A)\leq 0$.
This proves the lemma.
\end{proof}
A simple but useful corollary of Lemma~\ref{lem:genus_submaps} is that neither deletion nor contraction of edges increase the genus: $g(M\backslash A^c)\leq g(M)$ and $g(M/A)\leq g(M)$. 

\subsection{A Tutte polynomial for maps}


\begin{definition} \label{def:surface_tutte}
Let $\mathbf x=(x, x_0,x_1,x_2,\dots)$, $\mathbf y=(y, y_0,y_1,\dots)$ be two infinite sequences of commuting indeterminates.

Given a map $M=(V,E,F)$, the {\em surface Tutte polynomial} of $M$ is the multivariate polynomial
\begin{equation}\label{eq:surface_Tutte}\mathcal T(M;\mathbf x,\mathbf y)=\sum_{A\subseteq E}x^{n^*(M/A)}y^{n(M\backslash A^c)}\prod_{\stackrel{\mbox{\rm \tiny conn. cpts $M_i$}}{\mbox{\rm \tiny of $M/A$}}}x_{g(M_i)}\prod_{\stackrel{\mbox{\rm \tiny conn. cpts $M_j$}}{\mbox{\rm \tiny of $M\backslash A^c$}}}y_{g(M_j)},\end{equation}
where $A^c=E\setminus A$ for $A\subseteq E$.  
\end{definition}
\begin{remark} \label{rmk:finitely_many_var}
By Lemma~\ref{lem:genus_submaps}, for a given map $M$, $\mathcal T(M;\mathbf x,\mathbf y)$ is a polynomial in indeterminates $x,x_0,\dots, x_{g(M)}$ and $y, y_0,\dots, y_{g(M)}$. 
\end{remark}
\begin{remark}\label{rmk:alt_def}
In the summation~\eqref{eq:surface_Tutte} defining $\mathcal T(M;\mathbf x, \mathbf y)$ the exponent of $x$ is 
$n^*(M/A)=n((M/A)^*)=n(M^*\backslash A)$.
(For convenience, and where no confusion can arise, we make the usual identification of the edges of the surface dual $M^*$ with the edges of $M$.) 

\end{remark}




See Figure~\ref{fig:TwoLoopsTorus} for a small example of the calculations involved in computing $\mathcal T(M;\mathbf x,\mathbf y)$ from its subset expansion~\eqref{eq:surface_Tutte}.

\ifpdf

\begin{figure}
\centering
\begin{minipage}{0.48\textwidth}
\includegraphics[scale=1.05]{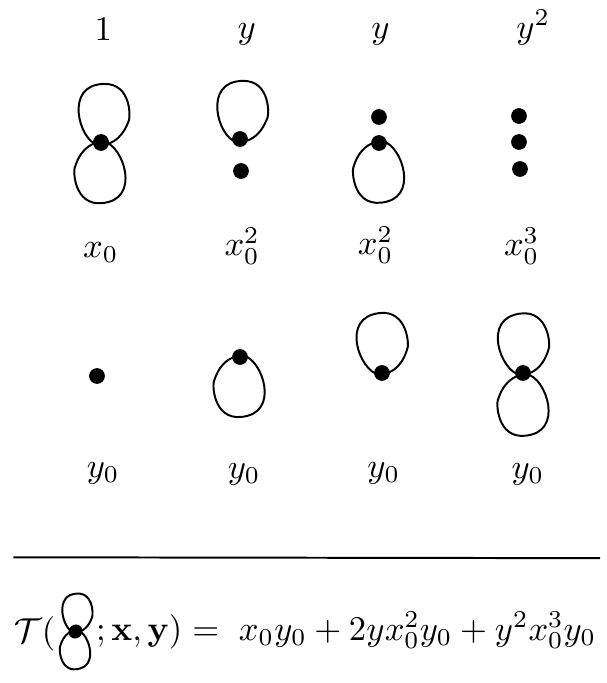}
\end{minipage}
\hfill
\begin{minipage}{0.48\textwidth}
\includegraphics[scale=1.05]{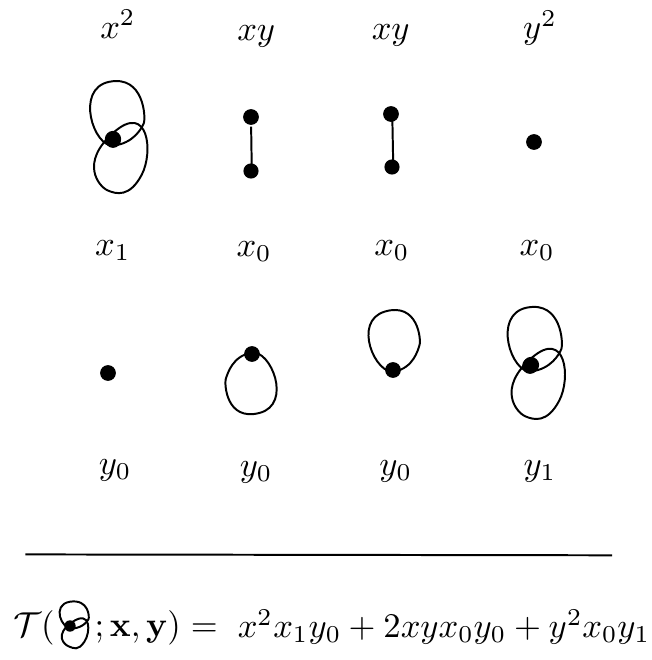}
-\end{minipage}
\caption{The surface Tutte polynomial of two loops on a single vertex, on the left embedded in the plane and on the right embedded in the torus. 
}\label{fig:TwoLoopsTorus}
\end{figure}
\else
\fi

The surface Tutte polynomial is multiplicative over the connected components of a~map:
\begin{proposition}\label{prop:multiplicativity}
For maps $M_1$ and $M_2$,
$$\mathcal T(M_1\sqcup M_2;\mathbf x,\mathbf y)=\mathcal T(M_1;\mathbf x,\mathbf y)\mathcal T(M_2;\mathbf x,\mathbf y).$$ 
\end{proposition}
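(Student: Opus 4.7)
The plan is to unfold the definition on the left-hand side, use the fact that subsets $A$ of $E=E_1\sqcup E_2$ correspond bijectively to pairs $(A_1,A_2)$ with $A_i\subseteq E_i$, and show that every factor in the summand for $A$ factors as the product of the corresponding factors for $A_1$ and $A_2$. Once this is done, the double sum splits by distributivity and the result follows immediately.

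The first step is to observe that deletion and contraction respect disjoint unions, that is $(M_1\sqcup M_2)/A=(M_1/A_1)\sqcup(M_2/A_2)$ and $(M_1\sqcup M_2)\backslash A^c=(M_1\backslash A_1^c)\sqcup(M_2\backslash A_2^c)$, where the complements are taken inside $E_1$ and $E_2$ respectively. This is immediate from the definition of deletion and contraction, since each operation is performed edge-by-edge and no edge of $M_1$ is incident to any vertex of $M_2$.

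The second step is to check that the exponents of $x$ and $y$ are additive. By the additivity of $v, e, f, k$ over disjoint unions, stated in Definition of the map parameters, we have $n^*(N_1\sqcup N_2)=n^*(N_1)+n^*(N_2)$ and $n(N_1\sqcup N_2)=n(N_1)+n(N_2)$ for any maps $N_1,N_2$. Applied to $N_1=M_1/A_1$, $N_2=M_2/A_2$ for the $x$-exponent, and to $N_1=M_1\backslash A_1^c$, $N_2=M_2\backslash A_2^c$ for the $y$-exponent, this gives
\[
x^{n^*((M_1\sqcup M_2)/A)}=x^{n^*(M_1/A_1)}\cdot x^{n^*(M_2/A_2)}
\]
and similarly for the $y$-power.

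The third step handles the products over connected components. The set of connected components of $(M_1\sqcup M_2)/A=(M_1/A_1)\sqcup(M_2/A_2)$ is the disjoint union of the sets of connected components of $M_1/A_1$ and $M_2/A_2$, so the product $\prod_i x_{g(M_i)}$ splits as the product of the corresponding factors for $M_1/A_1$ and $M_2/A_2$. The same argument applies to the $y_{g(M_j)}$ product.

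Putting these three observations together, the summand indexed by $A=A_1\sqcup A_2$ in $\mathcal T(M_1\sqcup M_2;\mathbf x,\mathbf y)$ is precisely the product of the summand indexed by $A_1$ in $\mathcal T(M_1;\mathbf x,\mathbf y)$ and the summand indexed by $A_2$ in $\mathcal T(M_2;\mathbf x,\mathbf y)$. Distributivity of multiplication over addition in the polynomial ring then yields
\[
\mathcal T(M_1\sqcup M_2;\mathbf x,\mathbf y)=\sum_{A_1\subseteq E_1}\sum_{A_2\subseteq E_2}(\cdots)_1\cdot(\cdots)_2=\mathcal T(M_1;\mathbf x,\mathbf y)\,\mathcal T(M_2;\mathbf x,\mathbf y).
\]
There is no real obstacle here; the only thing to be slightly careful about is the bookkeeping of complements, namely that $(A_1\sqcup A_2)^c=A_1^c\sqcup A_2^c$ where each complement is taken inside its own $E_i$, which is unambiguous.
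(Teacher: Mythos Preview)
Your proof is correct and follows essentially the same approach as the paper's proof: both rely on the additivity of the nullity (and dual nullity) over disjoint unions together with the fact that the connected components of $(M_1\sqcup M_2)/A$ and $(M_1\sqcup M_2)\backslash A^c$ split as the disjoint union of those of the respective pieces. The paper's proof is simply a terser version of yours, omitting the explicit bookkeeping about the bijection $A\leftrightarrow(A_1,A_2)$ and the final distributivity step that you spell out.
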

\begin{proof} The nullity parameter $n(M)$ is additive over disjoint unions. The set of connected components of $(M_1\sqcup M_2)/A$ is the disjoint union of the connected components of $M_1/A_1$ and those of $M_2/A_2$, where $A=A_1\sqcup A_2$ with $A_1\subseteq E(M_1)$ and $A_2\subseteq E(M_2)$. Likewise for $(M_1\sqcup M_2)\backslash A^c$.\end{proof}

The surface Tutte polynomial behaves with respect to geometric duality in the same way as the Tutte polynomial of a matroid does with respect to matroid duality:
\begin{proposition}\label{prop:duality}
If $M$ is a map and $M^*$ its surface dual then
$$\mathcal T(M^*;\mathbf x,\mathbf y)=\mathcal T(M;\mathbf y,\mathbf x).$$
\end{proposition}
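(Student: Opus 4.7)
The plan is to manipulate the subset-sum defining $\mathcal T(M^*;\mathbf x,\mathbf y)$ by re-indexing and then applying the duality relations between the operations of deletion/contraction in $M$ and in $M^*$. Identifying $E(M^*)$ with $E(M)$ in the usual way, write
\[
\mathcal T(M^*;\mathbf x,\mathbf y)=\sum_{B\subseteq E}x^{n^*(M^*/B)}y^{n(M^*\backslash B^c)}\prod_{\mbox{\tiny cpts $N_i$ of $M^*/B$}}x_{g(N_i)}\prod_{\mbox{\tiny cpts $N_j$ of $M^*\backslash B^c$}}y_{g(N_j)},
\]
and make the substitution $B=A^c$, which is a bijection on subsets of $E$.

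The key input is the compatibility of the map operations with surface duality: from the definition $(M/e)^*=M^*\backslash e^*$ one gets, for any $A\subseteq E$, the identities $(M/A)^*=M^*\backslash A$ and, dually, $(M\backslash A)^*=M^*/A$. After the substitution $B=A^c$ these yield
\[
M^*/B = M^*/A^c = (M\backslash A^c)^*,\qquad M^*\backslash B^c = M^*\backslash A = (M/A)^*.
\]
Now apply the elementary facts that for any map $N$ one has $n(N^*)=n^*(N)$, $n^*(N^*)=n(N)$, that surface duality sets up a bijection between connected components of $N$ and of $N^*$ (each component of $M^*$ lives on the same surface as the corresponding component of $M$), and that these corresponding components have equal genus, $g(N^*)=g(N)$. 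Applying these to $N=M\backslash A^c$ and $N=M/A$ gives
\[
n^*(M^*/A^c)=n(M\backslash A^c),\qquad n(M^*\backslash A)=n^*(M/A),
\]
and the products over components of $M^*/A^c$ and $M^*\backslash A$ become products over components of $M\backslash A^c$ and $M/A$ respectively, with the same genus subscripts.

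Assembling these substitutions gives
\[
\mathcal T(M^*;\mathbf x,\mathbf y)=\sum_{A\subseteq E}x^{n(M\backslash A^c)}y^{n^*(M/A)}\prod_{\mbox{\tiny cpts $M_i$ of $M\backslash A^c$}}x_{g(M_i)}\prod_{\mbox{\tiny cpts $M_j$ of $M/A$}}y_{g(M_j)},
\]
which is exactly the expression obtained from the definition of $\mathcal T(M;\mathbf x,\mathbf y)$ by interchanging $\mathbf x$ and $\mathbf y$, i.e.\ $\mathcal T(M;\mathbf y,\mathbf x)$. There is no real obstacle here beyond careful bookkeeping of the complementation $B\leftrightarrow A^c$ and making sure that the deletion/contraction duality is applied on the correct side; everything else reduces to the fact that surface duality preserves genus, components, and interchanges $n$ with $n^*$.
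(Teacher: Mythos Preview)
Your proof is correct and follows essentially the same approach as the paper's own proof. The paper's argument is terser—it simply cites the identities $n^*(M^*/A)=n(M\backslash A)$, $n(M^*\backslash A^c)=n^*(M/A^c)$, the genus-preserving bijection on components under duality, and declares the result immediate—but the content is the same: your explicit re-indexing $B=A^c$ and invocation of $(M/A)^*=M^*\backslash A$ just unpack what the paper leaves implicit.
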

\begin{proof} This is immediate from Definition~\ref{def:surface_tutte} and the fact that $n^*(M^*/A)=n(M\backslash A)$, $n(M^*\backslash A^c)=n^*(M/A^c)$, $g(M^*)=g(M)$ and that the connected components of $M^*/A$ (respectively $M^*\backslash A^c$) are in one-one correspondence with and have the same genus as the connected components of its dual $M\backslash A$ (respectively $M/A^c$). 
\end{proof}

The surface Tutte polynomial coincides with the Tutte polynomial for plane embeddings of planar graphs:

\begin{proposition}\label{prop:surface_Tutte_plane}
If $M$ is a plane map embedding of a planar graph $\Gamma$ then $\mathcal T(M;\mathbf x,\mathbf y)$ is a polynomial in $x,y,x_0,y_0$ and $$\mathcal T(M;\mathbf x,\mathbf y)=(x_0y_0)^{k(\Gamma)}T(\Gamma;y_0x+1,x_0y+1).$$
\end{proposition}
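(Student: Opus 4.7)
The plan is to use the fact that for a plane map every submap also has genus zero, collapse $\mathcal T(M;\mathbf x,\mathbf y)$ to a four-variable polynomial, and then translate each exponent appearing in its subset expansion into a graph invariant of $\Gamma$ so that the expression matches the subgraph expansion of $T(\Gamma;y_0x+1,x_0y+1)$ scaled by $(x_0y_0)^{k(\Gamma)}$.

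First, by the corollary to Lemma~\ref{lem:genus_submaps} that neither deletion nor contraction raises the genus, $g(M_i)=0$ for every connected component $M_i$ of every $M/A$ and every $M\backslash A^c$. Thus $x_{g(M_i)}=x_0$ and $y_{g(M_j)}=y_0$ throughout, and the products in Definition~\ref{def:surface_tutte} collapse to $x_0^{k(M/A)}y_0^{k(M\backslash A^c)}$. Hence $\mathcal T(M;\mathbf x,\mathbf y)$ involves only the variables $x,y,x_0,y_0$ and equals
$$\sum_{A\subseteq E} x^{n^*(M/A)}\,y^{n(M\backslash A^c)}\,x_0^{k(M/A)}\,y_0^{k(M\backslash A^c)}.$$

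Next I would rewrite the four exponents as invariants of $\Gamma$. For deletion one uses directly $n(M\backslash A^c)=n(\Gamma\backslash A^c)$ (by equation~\eqref{eq:mullity_del} applied edgewise) and $k(M\backslash A^c)=k(\Gamma\backslash A^c)=k(\Gamma)+r(\Gamma)-r(\Gamma\backslash A^c)$. For contraction, since $g(M/A)=0$, equation~\eqref{eq:dual_nullity} gives $n^*(M/A)=r(M/A)=v(M/A)-k(M/A)$. To compute $k(M/A)$ I pass to the surface dual via the identity $(M/A)^\ast=M^\ast\backslash A$, which yields $k(M/A)=k(M^\ast\backslash A)=k(\Gamma^\ast\backslash A)$. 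Since $\Gamma$ is planar, the cycle matroid of $\Gamma^\ast$ is the dual of the cycle matroid of $\Gamma$, so $r_{\Gamma^\ast}(A^c)=|A^c|+r(\Gamma\backslash A^c)-r(\Gamma)$; combining this with Euler's formula $v(\Gamma)-e(\Gamma)+f(\Gamma)=2k(\Gamma)$ (applied componentwise to each plane component) gives $k(\Gamma^\ast\backslash A)=k(\Gamma)+n(\Gamma\backslash A^c)$. Plugging this back and expressing $v(M/A)$ from Euler's formula applied to the genus-zero map $M/A$ (using $f(M/A)=v((M/A)^\ast)=v(M^\ast)=f(\Gamma)$) produces $n^*(M/A)=r(\Gamma)-r(\Gamma\backslash A^c)$.

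Finally, substituting the four identifications into the collapsed expansion gives
$$\sum_{A\subseteq E} x^{r(\Gamma)-r(\Gamma\backslash A^c)}\,y^{n(\Gamma\backslash A^c)}\,x_0^{k(\Gamma)+n(\Gamma\backslash A^c)}\,y_0^{k(\Gamma)+r(\Gamma)-r(\Gamma\backslash A^c)},$$
which factors as $(x_0y_0)^{k(\Gamma)}\sum_{A\subseteq E}(y_0x)^{r(\Gamma)-r(\Gamma\backslash A^c)}(x_0y)^{n(\Gamma\backslash A^c)}$, and this equals $(x_0y_0)^{k(\Gamma)}T(\Gamma;y_0x+1,x_0y+1)$ by the subgraph expansion~\eqref{eq:tutte_e1}. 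The main obstacle is the identification of $k(M/A)$: contracting a loop in a plane map splits its vertex and can raise the component count, so $k(M/A)\neq k(\Gamma)$ in general. The cleanest way around this, as sketched above, is to transfer to the plane dual via $(M/A)^\ast=M^\ast\backslash A$ (where deletion leaves the vertex set undisturbed) and then invoke matroid duality between $\Gamma$ and $\Gamma^\ast$ to express $k(\Gamma^\ast\backslash A)$ purely in terms of $k(\Gamma)$ and $n(\Gamma\backslash A^c)$.
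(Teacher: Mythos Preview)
Your argument is correct and reaches the same four exponent identifications as the paper, but by a different route. The paper, having just established Lemma~\ref{lem:genus_submaps}, uses its \emph{equality} case: since $g(M/A)+g(M\backslash A^c)=0=g(M)$, the two bracketed quantities in that lemma vanish, giving immediately $r(M/A)=k(M\backslash A^c)-k(M)$ and, dually, $k(M/A)-k(M)=n(M\backslash A^c)$. From there the exponents of $x$ and $x_0$ drop out in one line each.

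You instead bypass the equality case of Lemma~\ref{lem:genus_submaps} (using only its inequality to force all genera to zero) and recover the same two identities by passing through the surface dual: $(M/A)^\ast=M^\ast\backslash A$ converts $k(M/A)$ into $k(\Gamma^\ast\backslash A)$, which you then compute via matroid duality for the planar pair $(\Gamma,\Gamma^\ast)$ together with Euler's formula. This is a legitimate and self-contained alternative; it trades the map-theoretic equality condition for classical planar matroid duality. The paper's route is shorter because Lemma~\ref{lem:genus_submaps} has already packaged exactly the identity you need, whereas your route makes explicit that in genus zero that identity is nothing more than planar graph duality.
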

\begin{proof}
For a plane map $M$ with underlying graph $\Gamma$ we have $g(M)=0$ and $g(M\backslash A^c)=0=g(M/A)$ for every $A\subseteq E$ (a consequence of Lemma~\ref{lem:genus_submaps}). Hence $\mathcal{T}(M;\mathbf x,\mathbf y)$ is a polynomial in $x,y,x_0,y_0$.

Using the defining subset expansion~\eqref{eq:surface_Tutte} for $\mathcal T(M;\mathbf x,\mathbf y)$ and equation~\eqref{eq:dual_nullity}, we have
\begin{equation}\label{eq:stutte_plane_1}
	\mathcal{T}(M;\mathbf x,\mathbf y)=\sum_{A\subseteq E}x^{r(M/A)}y^{n(M\backslash A^c)}x_0^{k(M/A)}y_0^{k(M\backslash A^c)}.
\end{equation}

We check that the exponents of $x,y,x_0$ and $y_0$ in equation~\eqref{eq:stutte_plane_1} are, comparing with the defining expansion of the Tutte polynomial~\eqref{eq:tutte_e1},  respectively equal to $k(\Gamma\backslash A^c)-k(\Gamma), n(\Gamma\backslash A^c), k(\Gamma\backslash A^c)$ and $n(\Gamma\backslash A^c)+k(\Gamma)$.

The exponent of $y$ is $n(M\backslash A^c)=n(\Gamma\backslash A^c)$ (using equation~\eqref{eq:mullity_del} above). 

%
For the exponent of $x$, since $g(M)=0$ and $g(M/A)+g(M\backslash A^c)\leq g(M)$ we have $g(M/A)+g(M\backslash A^c)=g(M)$, whence $v(M/A)-k(M/A)=k(M\backslash A^c)-k(M)$ by Lemma~\ref{lem:genus_submaps}.
This implies
\begin{align*}r(M/A) &= k(M\backslash A^c)-k(M)\\
  &=k(\Gamma\backslash A^c)-k(\Gamma).\end{align*}


The exponent of $y_0$ in equation~\eqref{eq:stutte_plane_1} is equal to $k(M\setminus A^c)=k(\Gamma\backslash A^c)=[k(\Gamma\backslash A^c)-k(\Gamma)]+k(\Gamma)$. 

The dual of the identity $k(M\setminus A^c)-k(M)=r(M/A)$ is 
$k(M/A)-k(M)=n(M\setminus A^c)$ for each $A\subseteq E$,
from which we find that the exponent of $x_0$ is equal to $k(M/A)=n(M\backslash A^c)+k(M)=n(\Gamma\backslash A^c)+k(\Gamma)$.
%
%
%
\end{proof}

%

\subsection{Relation to other map polynomials}\label{sec:relation}

The Krushkal polynomial~\cite{K11} of a graph $\Gamma$ embedded in a surface $\Sigma$ as a map $M$ is, using the definition in~\cite{CMNR16} and our notation, given by 
$$\mathcal K(M;x,y,a,b)=\sum_{A\subseteq E}(x-1)^{k(M\setminus A^c)-k(M)}y^{n(M\backslash A^c)}a^{g(M/A)}b^{g(M\setminus A^c)}.$$

The Bollob\'as--Riordan polynomial~\cite{BR01} of a graph $\Gamma$ orientably embedded in $\Sigma$ as a map $M$ is defined by
$$\mathcal R(M;x,y,z)=\sum_{A\subseteq E}(x-1)^{k(M\setminus A^c)-k(M)}y^{n(M\setminus A^c)}z^{2g(M\setminus A^c)}$$
and is obtained from the Krushkal polynomial~\cite{K11} by setting $a=1, b=z^2$ in $\mathcal K(M;x,y,a,b)$. (The Krushkal polynomial and Bollob\'as--Riordan polynomial are more generally defined for embeddings of graphs in non-orientable surfaces~\cite{BR02, K11}.) 



The Las Vergnas polynomial~\cite{LV80} of a map $M$ is shown in~\cite{ACEMS13}, \cite[Prop. 3.3]{EMM15} to be given by, in our notation, 
$$\mathcal L(M;x,y,z)=\sum_{A\subseteq E}(x\!-\!1)^{k(M\backslash A^c)-k(M)}(y\!-\!1)^{n(M\backslash A^c)-g(M)-g(M\backslash A^c)+g(M/A)}z^{g(M)-g(M\backslash A^c)+g(M/A)}.$$

The surface Tutte polynomial $\mathcal T(M;\mathbf x,\mathbf y)$ specializes to the Krushkal polynomial (and hence the Bollob\'as--Riordan polynomial of $M$, the Las Vergnas polynomial of $M$, and the Tutte polynomial of the underyling graph of $M$), as may be verified by making the requisite substitutions:

\begin{proposition}\label{prop:specs}
The surface Tutte polynomial $T(M;\mathbf x,\mathbf y)$ in indeterminates $\mathbf x=(x,x_0,$ $x_1,x_2,\dots)$ and $\mathbf y=(y,y_0,y_1,y_2,\dots)$ has the following specializations:
\begin{itemize}
\item the Krushkal polynomial of a map $M$ is given by 
$$\mathcal K(M;X,Y,A,B)=(X-1)^{-k(M)}\mathcal T(M;\mathbf x,\mathbf y),$$
in which $x=1, x_g=A^g, y=Y, y_g=(X-1)B^g$ for $g=0,1,2,\dots$, 
\item the Bollob\'as--Riordan polynomial  of a map $M$ is given by
$$\mathcal R(M;X,Y,Z)=(X-1)^{-k(M)}\mathcal T(M;\mathbf x,\mathbf y),$$ 
in which $x=1=x_g$, $y=Y$ and $y_g=(X-1)Z^{2g}$ for $g=0,1,2,\dots$,
\item the Las Vergnas polynomial  of a map $M$ is given by 
$$\mathcal L(M;X,Y,Z)=(X-\!1)^{-k(M)}(Y-\!1)^{-g(M)}Z^{g(M)}\mathcal T(M;\mathbf x,\mathbf y),$$ 
with $x=1$, $x_g=(Y-\!1)^gZ^g$, $y=Y-\!1$, $y_g=(X-\!1)(Y-\!1)^{-g}Z^{-g}$ for $g=0,1,2,\dots$,  
and 
\item the Tutte polynomial of a graph $\Gamma$ is given by
$$T(\Gamma;X,Y)=(X-\!1)^{-k(M)}\mathcal T(M;\mathbf x,\mathbf y),$$ 
with $x=1=x_g$, $y=Y-\!1$, $y_g=X-\!1$ for $g=0,1,2,\dots$, and in which $M$ is an arbitrary embedding of $\Gamma$ as a map. 
\end{itemize}

\end{proposition}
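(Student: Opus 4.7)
The proof proposal is essentially four substitution-and-verify exercises. The unifying principle is that the two products in the defining expansion~\eqref{eq:surface_Tutte} of $\mathcal{T}(M;\mathbf{x},\mathbf{y})$ collapse cleanly whenever $x_g$ and $y_g$ are of the form $c\cdot\alpha^g$, because genus is additive over connected components (Definition of $g$ for disjoint unions) and the number of components determines the multiplicity of the constant factor $c$. Concretely, if $x_g=\alpha^{g}$ then
\[
\prod_{\text{conn.\ cpts\ }M_i\text{ of }M/A} x_{g(M_i)}=\alpha^{\sum_i g(M_i)}=\alpha^{g(M/A)},
\]
and if $y_g=c\cdot\beta^g$ then the corresponding product equals $c^{k(M\backslash A^c)}\beta^{g(M\backslash A^c)}$. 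This is the only identity about genus and components one needs.

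With this observation, I would first handle the Krushkal specialization, which is the deepest of the four. Substituting $x=1$, $x_g=A^g$, $y=Y$, $y_g=(X-1)B^g$ into~\eqref{eq:surface_Tutte} and applying the collapse identities gives
\[
\mathcal T(M;\mathbf x,\mathbf y)=\sum_{A\subseteq E}Y^{n(M\backslash A^c)}(X-1)^{k(M\backslash A^c)}A^{g(M/A)}B^{g(M\backslash A^c)},
\]
and pulling out $(X-1)^{-k(M)}$ produces exactly $\mathcal K(M;X,Y,A,B)$ as defined in Section~\ref{sec:relation}. The Bollob\'as--Riordan case then follows immediately, either by the same direct substitution or by invoking the already-cited reduction $\mathcal R(M;X,Y,Z)=\mathcal K(M;X,Y,1,Z^2)$ (setting $x_g=1$ and $y_g=(X-1)Z^{2g}$).

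For the Las Vergnas polynomial, I would substitute $x=1$, $x_g=(Y-1)^g Z^g$, $y=Y-1$, $y_g=(X-1)(Y-1)^{-g}Z^{-g}$ and again collapse the two products. The $M/A$ product contributes $(Y-1)^{g(M/A)}Z^{g(M/A)}$ and the $M\backslash A^c$ product contributes $(X-1)^{k(M\backslash A^c)}(Y-1)^{-g(M\backslash A^c)}Z^{-g(M\backslash A^c)}$. Multiplying by the prefactor $(X-1)^{-k(M)}(Y-1)^{-g(M)}Z^{g(M)}$ assembles the exponents of $X-1$, $Y-1$ and $Z$ into exactly $k(M\backslash A^c)-k(M)$, $n(M\backslash A^c)-g(M)-g(M\backslash A^c)+g(M/A)$ and $g(M)-g(M\backslash A^c)+g(M/A)$, matching the definition of $\mathcal L(M;X,Y,Z)$ term-by-term.

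Finally, for the Tutte polynomial of the underlying graph, I would substitute $x=1=x_g$ (so both products trivialise, except that the $y_g=X-1$ substitution still contributes one factor of $X-1$ per component of $M\backslash A^c$) to obtain $\sum_A (Y-1)^{n(M\backslash A^c)}(X-1)^{k(M\backslash A^c)}$; after multiplying by $(X-1)^{-k(M)}$ I would match this to~\eqref{eq:tutte_e1} using equation~\eqref{eq:mullity_del}, the identity $k(M\backslash A^c)=k(\Gamma\backslash A^c)$, and the elementary relation $r(\Gamma)-r(\Gamma\backslash A^c)=k(\Gamma\backslash A^c)-k(\Gamma)$ (which holds since $v(\Gamma)=v(\Gamma\backslash A^c)$). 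This shows the result is independent of the chosen embedding $M$ of $\Gamma$, as claimed. No step is delicate; the only real bookkeeping is in the Las Vergnas computation, where one must be careful that negative exponents of $Y-1$ and $Z$ in $y_g$ combine with the prefactor correctly, so this is the step I would write out most explicitly.
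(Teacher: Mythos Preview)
Your proposal is correct and is exactly the approach the paper takes: the paper simply states that the specializations ``may be verified by making the requisite substitutions'' and gives no further detail, so your write-up is in fact more explicit than the original. The key device you isolate---that when $x_g=c\,\alpha^g$ the product over components collapses to $c^{k(\cdot)}\alpha^{g(\cdot)}$ by additivity of genus---is precisely what makes all four verifications routine, and your bookkeeping (including the Las Vergnas exponents and the identity $r(\Gamma)-r(\Gamma\backslash A^c)=k(\Gamma\backslash A^c)-k(\Gamma)$ for the Tutte case) checks out.
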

Figure~\ref{fig:MapPolynomials} displays the relationship between the various map polynomials in Proposition~\ref{prop:specs} together with the polynomial $\mathcal Q(M;X,Y,A,B)$, defined at the end of Section~\ref{sec:intro_surface_Tutte} and which is considered in more detail in Section~\ref{sec:specializations}. 
The relationship between $\mathcal Q(M;X,Y,A,B)$ and the Krushkal polynomial 
is as yet unclear. In Section~\ref{sec:not_Kruskal} we ask whether  $\mathcal Q(M;X,Y,A,B)$ and the Krushkal polynomial are equivalent as map invariants  (Problem~\ref{prob:QK}), even though neither one appears to be a specialization of the other. 

\begin{figure}
\centering
\includegraphics[scale=0.96]{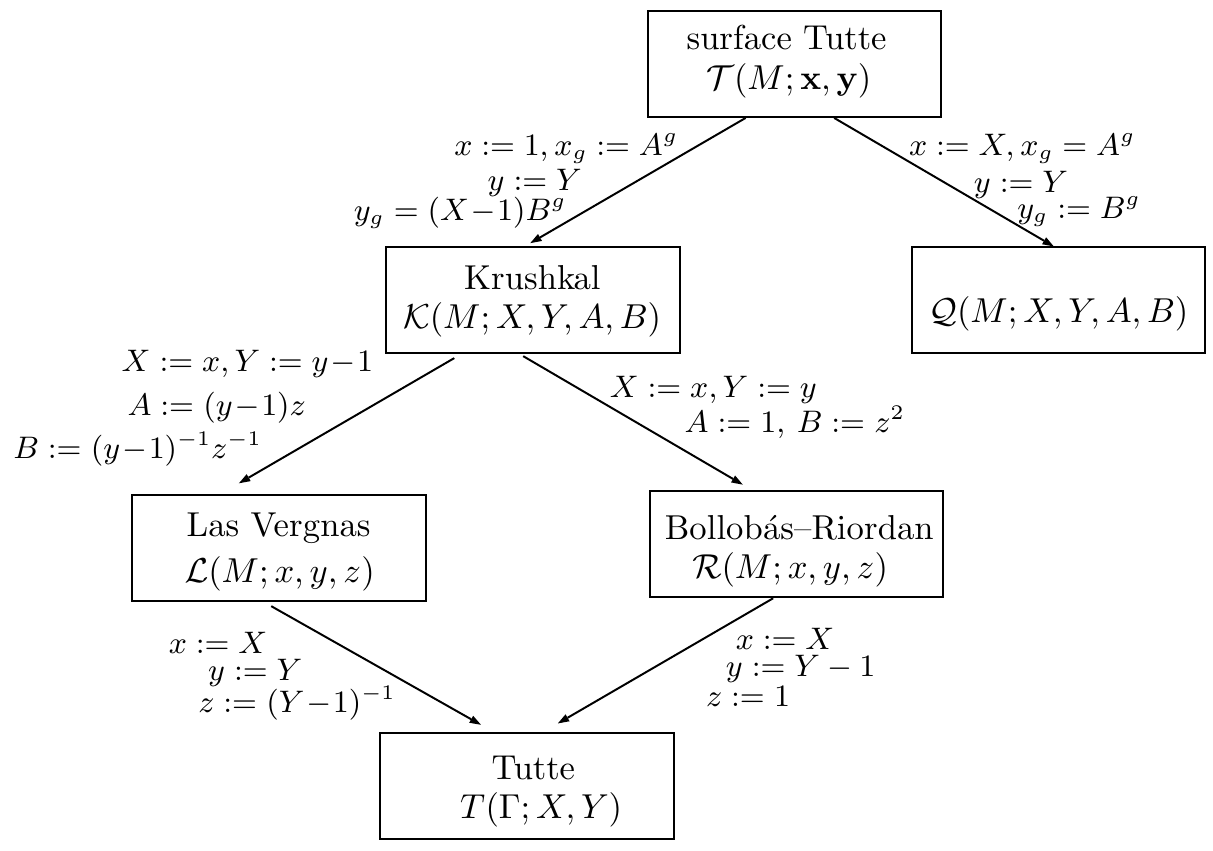}
\caption{Specializations of the surface Tutte polynomial for a graph $\Gamma$ embedded in an orientable surface $\Sigma$ as a map $M$ (prefactors, dependent only on $k(M)$ and $g(M)$, have been omitted -- see Proposition~\ref{prop:specs}.) When $M$ is plane ($\Sigma$ is the sphere) the hierarchy collapses to the Tutte polynomial of the underlying planar graph $\Gamma$. }\label{fig:MapPolynomials}
\end{figure}

\section{Specializations}\label{sec:spec}

\subsection{Flows and tensions}

We begin by giving analogues for maps of the specializations of the Tutte polynomial of a graph to the chomatic polynomial and the flow polynomial described at the end of Section~\ref{sec:Tutte_graph}. 
 
Recall that a map $M$ determines a cyclic ordering of edges around each of its vertices and around each of its faces (and contractible cycles generally), given by following edges in anticlockwise order in the surface in which the underlying graph of $M$ is embedded. Let $M$ be given an arbitrary orientation of its edges. When following edges around a face of $M$ edges may either forward or backward, according as they are traversed in the same or opposite direction to their orientation. Likewise, at each vertex edges are either directed away from or towards the vertex.

\begin{definition}\label{def:flows_tensions}
Let $G$ be a finite group. 
A {\em (nowhere-identity) local $G$-tension} of $M$ is an assignment of (non-identity) values of $G$ to the edges of $M$ such that the elements of $G$ on the edges around a face taken in anticlockwise cyclic order, and in which values on backward edges are inverted, have product equal to the identity. (It does not matter at which edge one starts.) Likewise, a {\em (nowhere-identity) local $G$-flow} of $M$ is an assignment of (non-identity) values of $G$ to edges of $M$ such that for each vertex the product of values around it in anticlockwise order,  and in which values on incoming edges are inverted,  is equal to the~identity. 
\end{definition}
See Figure~\ref{fig:flowdef} for an illustration of Definition~\ref{def:flows_tensions}. 

\begin{figure}
\centering
\includegraphics[width=0.95\textwidth]{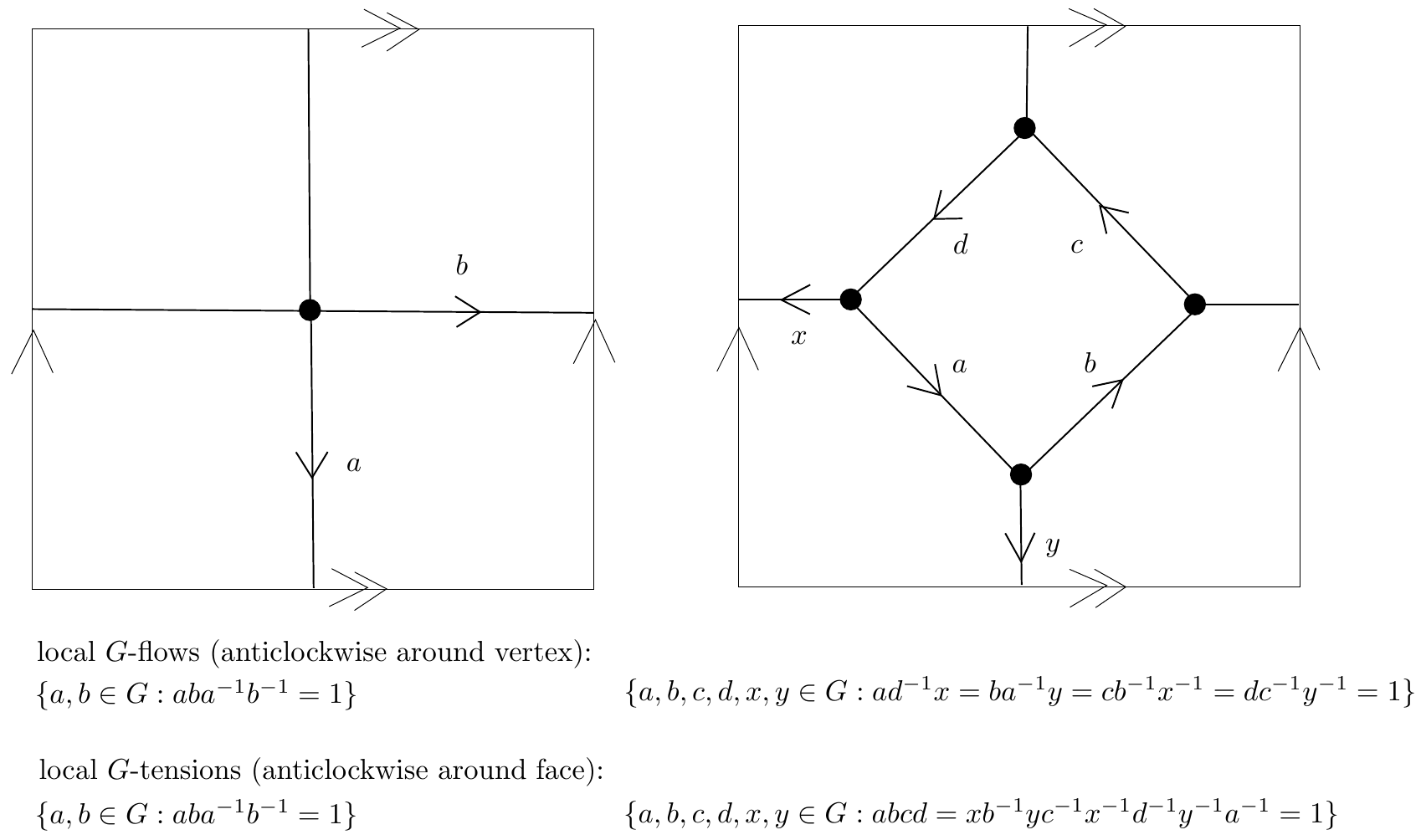}
\caption{Local $G$-flows and local $G$-tensions for two maps in the torus given a fixed arbitrary orientation of edges.}\label{fig:flowdef}
\end{figure}

When $M$ is a plane map and $G$ is an abelian group written additively (the identity is zero), nowhere-identity local $G$-tensions and nowhere-identity local $G$-flows coincide with nowhere-zero $G$-tensions and nowhere-zero $G$-flows of the underlying graph of $M$. For higher genus this correspondence no longer obtains. The qualifier ``local" refers in the case of tensions to the fact that we do not require the product of values around every cycle to equal the identity (just the facial walks) and in the case of flows that we do not require the product of values across every cutset to equal to identity (just the single vertex cutsets).

Definition~\ref{def:surface_tutte} of the surface Tutte polynomial gives a polynomial map invariant that has evaluations giving both the number of nowhere-identity local $G$-tensions and the number of nowhere-identity local $G$-flows. In this way we follow the example of Tutte in the way he defined the dichromate of a graph as a simultaneous generalization of the chromatic polynomial and flow polynomial. Recall~\cite{FH91} that to a finite group of order $n$ is associated a finite set of positive integers $\{n_i:i=1,\dots, \ell\}$, each $n_i$ a divisor of $n$ and such that $\sum n_i^2=n$, giving the dimensions of the irreducible representations of $G$ over $\mathbb C$. For an abelian group we have $n_i=1$ for each $1\leq i\leq \ell=n$.
In Section~\ref{ssec:n1flows} we prove the following:

\begin{theorem}\label{thm:spec_flows_tensions}
Let $M$ be a map, $\mathcal T(M;\mathbf x,\mathbf y)$ the surface Tutte polynomial of $M$, and $G$ a finite group the irreducible representations of which have dimensions $n_1,\dots,n_\ell$. 
Then the number of nowhere-identity local $G$-tensions of $M$ is given by
$$(-1)^{e(M)-f(M)}\mathcal T(M;\mathbf x,\mathbf y),\quad \mbox{\rm with }\; x=-|G|, y=1,\quad x_g=-\frac{1}{|G|}\sum_{i=1}^\ell n_i^{2-2g},\:\: y_g=1,$$ and the number of nowhere-identity local $G$-flows by
$$(-1)^{e(M)-v(M)}\mathcal T(M;\mathbf x,\mathbf y),\quad \mbox{\rm with }\; x=1, y=-|G|,\quad x_g=1,\: y_g=-\frac{1}{|G|}\sum_{i=1}^\ell n_i^{2-2g},$$ 
for $g=0,1,2,\dots$.
\end{theorem}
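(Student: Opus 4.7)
The plan is to prove the flow evaluation first and deduce the tension evaluation from it via the surface duality of Proposition~\ref{prop:duality}: local $G$-tensions of $M$ are by definition local $G$-flows of $M^*$, and the flow substitution $x=1$, $y=-|G|$, $x_g=1$, $y_g=-\frac{1}{|G|}\sum_i n_i^{2-2g}$ applied via $\mathcal T(M^*;\mathbf x,\mathbf y)=\mathcal T(M;\mathbf y,\mathbf x)$ becomes the tension substitution on $M$, with the sign $(-1)^{e(M^*)-v(M^*)}=(-1)^{e(M)-f(M)}$. For the flow case, the first step is Möbius inversion over the Boolean lattice of edge subsets: a local $G$-flow on $M$ that takes identity on every edge of $B\subseteq E$ restricts to a local $G$-flow on $M\backslash B$, and conversely extends uniquely by assigning identity on $B$ (identity values are invisible to the vertex-product condition, and deletion preserves the cyclic rotation on the remaining edges at each vertex), giving
\[
N^{\mathrm{flow}}_{\mathrm{nowhere\text{-}id}}(M)=\sum_{B\subseteq E}(-1)^{|B|}\,N^{\mathrm{flow}}_{\mathrm{all}}(M\backslash B).
\]

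The core step is a Frobenius--Mednykh character identity: for a connected map $N$ with $v$ vertices, $e$ edges, $f$ faces and genus $g$,
\[
N^{\mathrm{flow}}_{\mathrm{all}}(N)=|G|^{e-v}\sum_{i=1}^\ell n_i^{2-2g},
\]
and $N^{\mathrm{flow}}_{\mathrm{all}}$ is multiplicative over connected components. I would establish this by writing the indicator of the identity as $\mathbf 1[g=1]=\frac{1}{|G|}\sum_\chi \chi(1)\chi(g)$ at each of the $v$ vertex constraints, expanding each character as a trace of matrix coefficients of an irreducible unitary representation $R_\chi$, and applying Schur orthogonality
\[
\sum_{g\in G} R_{ij}(g)R_{k\ell}(g^{-1})=\frac{|G|}{\chi(1)}\,\delta_{i\ell}\delta_{jk}
\]
to sum $\phi(e)$ out for each edge $e$. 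Each edge then contracts the two flag-indices at its endpoints at cost $|G|/\chi(1)$ and forces the characters chosen at its endpoints to agree, so only one $\chi$ per connected component survives; the rotation system arranges these contractions so that precisely one closed trace of $R_\chi$ remains around each face of the ribbon graph (contributing $\chi(1)$ per face). Combined with $\chi(1)^v/|G|^v$ from the $v$ vertex factors, the collected exponents give $|G|^{e-v}\chi(1)^{v-e+f}=|G|^{e-v}\chi(1)^{2-2g}$ per component by Euler's identity.

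Inserting this identity into the inclusion--exclusion and using $v(M\backslash A^c)=v(M)$ and $e(M\backslash A^c)=|A|$, the powers of $-1$ and $|G|$ arising from $(-|G|)^{n(M\backslash A^c)}$ and the $y_g$ factors in $\mathcal T(M;\mathbf x,\mathbf y)$ combine, under the reindexing $A\mapsto E\setminus B$, to reproduce $(-1)^{e(M)-v(M)}\mathcal T(M;\mathbf x,\mathbf y)$ term by term. The main obstacle is the Frobenius--Mednykh step: the character expansion and Schur orthogonality are standard in isolation, but the careful bookkeeping needed to identify the surviving matrix-coefficient traces with the faces of $M$ (via the rotation system) and thereby extract the Euler-characteristic exponent $2-2g$ is where the substance of the argument lies.
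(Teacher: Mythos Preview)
Your proposal is correct and follows essentially the same route as the paper: the paper proves the Frobenius--Mednykh count $q^1_G(N)=|G|^{e-v}\sum_\ell n_\ell^{\chi(N)}$ for connected $N$ (Theorem~\ref{thm:flow count}) by exactly the character expansion and Schur orthogonality argument you outline, with the rotation-system bookkeeping identifying the surviving traces with faces; it then applies inclusion--exclusion (equation~\eqref{eq:in-ex}) to obtain $q_G(M)$, and derives the tension count from $p_G(M)=q_G(M^*)$ together with Proposition~\ref{prop:duality}. Your final step of matching the resulting sum with the substitution into $\mathcal T(M;\mathbf x,\mathbf y)$ is the ``immediately implies Theorem~\ref{thm:spec_flows_tensions}'' that the paper leaves to the reader.
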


\begin{remark}
As noted in Remark~\ref{rmk:finitely_many_var}, the polynomial $\mathcal T(M;\mathbf x,\mathbf y)$ is a polynomial in finitely many indeterminates: its specializations for counting flows and tensions involve only the variables $x_g, y_g$ for $g=0,1,\dots, g(M)$. In Theorem~\ref{thm:spec_flows_tensions} we can set $x_g, y_g=0$ for $g>g(M)$.
\end{remark}

\subsection{Quasi-trees of given genus}
%


Replacing $x_g$ by $x^{-2g}x_g$ and $y_g$ by $y^{-2g}y_g$ for $g=0,1,\dots$ in $\mathcal T(M;\mathbf x,\mathbf y)$, we obtain the following renormalization of the surface Tutte polynomial: 
\begin{definition} \label{def:surface_tutte_renorm}
Let $\mathbf x=(x, x_0,x_1,x_2,\dots)$, $\mathbf y=(y, y_0,y_1,\dots)$ be two infinite sequences of commuting indeterminates.

Given a  map $M=(V,E,F)$, define 
\begin{equation}\label{eq:surface_Tutte_renorm}\widetilde{\mathcal T}(M;\mathbf x,\mathbf y)=\sum_{A\subseteq E}x^{r(M/A)}y^{r^*(M\backslash A^c)}\prod_{\stackrel{\mbox{\rm \tiny conn. cpts $M_i$}}{\mbox{\rm \tiny of $M/A$}}}x_{g(M_i)}\prod_{\stackrel{\mbox{\rm \tiny conn. cpts $M_j$}}{\mbox{\rm \tiny of $M\backslash A^c$}}}y_{g(M_j)},\end{equation}
where $r(M)=v(M)-k(M)$, $r^*(M)=f(M)-k(M)$ and $A^c=E\setminus A$ for $A\subseteq E$.  
\end{definition}

\begin{proposition}\label{prop:genus_xy} Let $M$ be a connected map with $g(M)=g$. Let $h$ be an integer with $0\leq h\leq g$. 
Then the evaluation
of $\widetilde{\mathcal T}(M;\mathbf x,\mathbf y)$ at $x=y=0$, $x_{i}=0$ for $i\neq g-h$, $x_{g-h}=1$, $y_j=0$ for $j\neq h$, and $y_{h}=1$ is equal to the number of quasi-trees of $M$ of genus $h$ (which is also equal to the number of quasi-trees of $M^*$ of genus $g-h$.)
\end{proposition}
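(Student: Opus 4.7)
The plan is to substitute the prescribed values into the expansion~\eqref{eq:surface_Tutte_renorm} and characterize the subsets $A\subseteq E$ that contribute. Setting $x=y=0$ kills every term with $r(M/A)>0$ or $r^*(M\backslash A^c)>0$, so each connected component of $M/A$ must be a bouquet ($v=k$) and each connected component of $M\backslash A^c$ must be a quasi-tree ($f=k$). The further substitutions $x_i=\delta_{i,g-h}$ and $y_j=\delta_{j,h}$ impose that every component of $M/A$ has genus $g-h$ and every component of $M\backslash A^c$ has genus $h$; each surviving $A$ then contributes exactly $1$.

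Next I would invoke the proof of Lemma~\ref{lem:genus_submaps}, which establishes individually that
\begin{align*}
\alpha &= k(M\backslash A^c)-k(M)-f(M\backslash A^c)+k(M/A)\le 0,\\
\beta  &= k(M/A)-k(M)-v(M/A)+k(M\backslash A^c)\le 0.
\end{align*}
Since $M$ is connected, $k(M)=1$, and the bouquet/quasi-tree conditions $v(M/A)=k(M/A)$ and $f(M\backslash A^c)=k(M\backslash A^c)$ collapse these to $\alpha=k(M/A)-1$ and $\beta=k(M\backslash A^c)-1$, forcing $k(M/A)=k(M\backslash A^c)=1$. Hence the surviving $A$ are exactly those for which $M/A$ is a single bouquet of genus $g-h$ and $M\backslash A^c$ is a spanning quasi-tree of $M$ of genus $h$; the equality $\alpha=\beta=0$ also yields $g(M/A)+g(M\backslash A^c)=g$ automatically.

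To close the bijection I would verify the converse: every spanning quasi-tree $A$ of $M$ of genus $h$ satisfies the above conditions, i.e.\ $M/A$ is a bouquet of genus $g-h$. This is the standard orientable quasi-tree--bouquet correspondence (Champanerkar--Kofman--Stoltzfus; Moffatt), after which Euler's formula identifies the genus as $g-h$. The evaluation therefore counts the spanning quasi-trees of $M$ of genus $h$. The parenthetical equality then follows from the duality $\widetilde{\mathcal T}(M^*;\mathbf x,\mathbf y)=\widetilde{\mathcal T}(M;\mathbf y,\mathbf x)$ (an immediate consequence of $(M/A)^*=M^*\backslash A$, together with $g(M^*)=g(M)$): applying the main claim to $M^*$ with parameter $g-h$ in place of $h$ yields the same evaluation at the swapped specialization, so the two counts coincide.

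The main obstacle is the converse step. Lemma~\ref{lem:genus_submaps} applied to a spanning quasi-tree $A$ gives $k(M/A)=1$ but only $v(M/A)\ge 1$; deducing $v(M/A)=1$ (equivalently $g(M/A)=g-h$, via Euler applied to $M/A$ with $f(M/A)=f(M)$) requires the structural orientable quasi-tree fact cited above and does not appear to be a direct consequence of Lemma~\ref{lem:genus_submaps} alone.
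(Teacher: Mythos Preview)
Your argument is correct and follows essentially the same route as the paper's proof: both reduce the forward direction to the two separate inequalities $\alpha,\beta\le 0$ established inside the proof of Lemma~\ref{lem:genus_submaps} (the paper phrases this via the equality case of that lemma), and from $v(M/A)=k(M/A)$, $f(M\backslash A^c)=k(M\backslash A^c)$ deduce $k(M/A)=k(M\backslash A^c)=1$. For the converse the paper simply \emph{asserts} that a spanning quasi-tree of genus $h$ contributes $1$ to the sum; your explicit appeal to the orientable quasi-tree/bouquet correspondence (that $f(M\backslash A^c)=k(M\backslash A^c)=1$ forces $v(M/A)=1$, equivalently that quasi-trees of $M$ and of $M^*$ are in bijection via complementation) is in fact more careful than what the paper supplies, and you are right that this step does not follow from Lemma~\ref{lem:genus_submaps} alone.
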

\begin{proof}
Let $A\subseteq E$ be such that it gives a nonzero contribution to the sum~\eqref{eq:surface_Tutte_renorm} with the given values assigned to the indeterminates $\mathbf x, \mathbf y$.
Then $r(M/A)+r^*(M\setminus A^c)=0$  (from the fact that $x=0=y$), each component of $M/A$ has genus $g-h$ and each component of $M\setminus A^c$ has genus $h$ (from the fact that $x_{g-h}=1=y_h$ while $x_i=0$ for $i\neq g-h$ and $y_j=0$ for $j\neq h$).
By additivity of the genus over connected components, this immediately implies that $g(M/A)\geq g-h$ and $g(M\setminus A^c)\geq h$.
Then by Lemma~\ref{lem:genus_submaps} we know that equality must hold, that is $g(M/A)= g-h$ and $g(M\setminus A^c)= h$.

Since rank and dual rank take non-negative values, we have $r(M/A)=0=r^*(M\backslash A^c)$, whence 
\begin{equation}\label{eq:vkfk} v(M/A)=k(M/A)\quad\mbox{ and }\quad f(M\backslash A^c)=k(M\backslash A^c).\end{equation}
As $g(M)=g(M/A)+g(M\setminus A^c)$ we know by Lemma~\ref{lem:genus_submaps},
\begin{equation}\label{eq:kf} k(M/A)+k(M\backslash A^c)= k(M)+f(M\backslash A^c),\end{equation}
and, dually, 
\begin{equation}\label{eq:vk} k(M/A)+k(M\backslash A^c)= k(M)+v(M/A).\end{equation}

From equations~\eqref{eq:vkfk} and~\eqref{eq:vk} we have $k(M\backslash A^c)=k(M)=1$ and from equations~\eqref{eq:vkfk} and~\eqref{eq:kf} we have $k(M/A)=k(M)=1$. 
Hence $M\backslash A^c$ is a quasi-tree and $h=g(M\backslash A^c)$, while $M^*\backslash A\cong (M/A)^*$ is a quasi-tree and $g-h=g(M/A)=g(M^*\backslash A)$.

Conversely, if $M\backslash A^c$ is a quasi-tree of genus $h$ (or $M^*\backslash A$ a quasi-tree of genus $g-h$) then $A$ contributes $1$ to the sum~\eqref{eq:surface_Tutte_renorm} with the given values assigned to the indeterminates.
Hence for $0\leq h\leq g$ the given evaluation is equal to $$\#\{A\subseteq E :  f(M\backslash A^c)=k(M\backslash A^c)=1, g(M\backslash A^c)=h\},$$
that is, the number of quasi-trees of $M$ of genus $h$.
\end{proof}

\begin{remark} The quasi-trees of maximum genus (genus zero) in a map $M$ form the bases of the matroid whose bases are the feasible sets of maximum (minimum) size in the $\Delta$-matroid associated with $M$ (see~\cite{B89}). Thus the evaluations of Proposition~\ref{prop:genus_xy} for $h\in\{0,g(M)\}$ are evaluations of the Tutte polynomial of the upper and lower matroids lying within the $\Delta$-matroid of~$M$.  
  \end{remark}
\begin{remark} As shown in~\cite{CKS11}, the ordinary generating function for quasi-trees of a connected map $M$ according to genus is given by evaluating the specialization 
$$q(M;t,Y)=\mathcal R(M;1,Y,tY^{-2})=\sum_{\stackrel{A\subseteq E}{k(M\backslash A^c)=1}}t^{g(M\backslash A^c)}Y^{n(M\backslash A^c)-2g(M\backslash A^c)}$$ of the Bollob\'as--Riordan polynomial 
 at $Y=0$ (we have $n(M\backslash A^c)-2g(M\backslash A^c)=f(M\backslash A^c)-k(M\backslash A^c)$).
 The coefficients of $q(M;t,0)$ are the evaluations of $\widetilde{T}(M;\mathbf x,\mathbf y)$ given in Proposition~\ref{prop:genus_xy}. Let $\zeta$ be a primitive $(g(M)\!+\!1)$th root of unity. Then
the number of quasi-trees of genus $h$, evaluated in Proposition~\ref{prop:genus_xy}, is also given by
$$\frac{1}{g(M)\!+\!1}\sum_{j=0}^{g(M)}q(M;\zeta^j,0)\zeta^{-jh}.$$
\end{remark}

\subsection{Quasi-forests}\label{sec:specializations}

A {\em quasi-forest} of $M$ is a submap of $M$ each of whose connected components is a quasi-tree. A {\em maximal quasi-forest} of a map $M$ is a quasi-forest each of whose components is a quasi-tree of a connected component of $M$. When $M$ is connected a maximal quasi-forest is a quasi-tree.  
When $M\backslash A^c$ is a maximal quasi-forest we have $k(M\backslash A^c)=k(M)$.

For the remaining specializations of the surface Tutte polynomial that follow it will be convenient to first specialize $\mathcal T(M;\mathbf x,\mathbf y)$ and its renormalization given in Definition~\ref{def:surface_tutte_renorm} to polynomials in four variables.
\begin{definition}\label{def:S}
We set $x_g=a^{g}$ and $y_g=b^{g}$ in $\mathcal T(M;\mathbf x,\mathbf y)$ (in which $\mathbf x=(x,x_0,x_1,\dots)$ and $\mathbf y=(y,y_0,y_1,\dots)$) to give the quadrivariate polynomial
\begin{equation}\label{eq:Q}\mathcal Q(M;x,y,a,b)=\sum_{A\subseteq E}x^{n^*(M/A)}y^{n(M\backslash A^c)}a^{g(M/A)}b^{g(M\backslash A^c)}.\end{equation}
Likewise, setting $x_g=a^{g}$ and $y_g=b^{g}$ in $\widetilde{\mathcal T}(M;\mathbf x,\mathbf y)$
\begin{equation}\label{eq:Q_renorm}\widetilde{\mathcal Q}(M;x,y,a,b)=\sum_{A\subseteq E}x^{r(M/A)}y^{r^*(M\backslash A^c)}a^{g(M/A)}b^{g(M\backslash A^c)}.\end{equation}
\end{definition}
The polynomials of Definition~\ref{def:S} 
are simply related by
$$\mathcal{Q}(M;x,y,a,b)=\widetilde{\mathcal Q}(M;x,y,ax^2,by^2),$$
but it is useful to have notation for them both separately, since we shall be making evaluations where some of the variables $x,y,a,b$ are set to zero.

By Proposition~\ref{prop:surface_Tutte_plane}, if $M=(V,E,F)$ is a plane embedding of $\Gamma=(V,E)$ then
$\mathcal Q(M;x,y,a,b)=T(\Gamma;x+1,y+1)=\widetilde{\mathcal Q}(M;x,y,ax^2,by^2)$.

\begin{remark}\label{rmk:Delta_Q} $\Delta$-matroids are to maps as matroids are to graphs~\cite{B89, CMNR16}. 
Just as the Las Vergnas polynomial, Bollob\'as--Riordan polynomial and Krushkal polynomial of a ribbon graph can be extended to $\Delta$-matroids, cf.~\cite[Section 6]{CMNR16}, this is also true for the polynomials $\mathcal Q(M;x,y,a,b)$ and $\widetilde{\mathcal Q}(M;x,y,a,b)$. 
A short explanation for $\widetilde{\mathcal Q}(M;x,y,a,b)$ is as follows.  
For a subset $A$ of the edges of $M$ the coefficient of $x$ is given by $r(M/A)$, which in $\Delta$-matroid terminology (see~\cite{CMNR16}) is nothing other than the rank of the lower matroid of the $\Delta$-matroid underlying $M/A$. 
The coefficient of $a$ is the genus of $M/A$, which in $\Delta$-matroid terminology is equal to half the \emph{width} of the $\Delta$-matroid underlying~$M/A$. The coefficients of $y$ and $b$ are similarly expressed in terms of parameters of the $\Delta$-matroid underlying~$M^*/A^c$. 

The surface Tutte polynomial $\mathcal T(M;\mathbf x,\mathbf y)$ cannot be extended to $\Delta$-matroids because its definition involves the genera of the connected components of $M$ and cannot be made independent of these; this is similar to how the $U$-polynomial~\cite{NW99} multivariate generalization of the Tutte polynomial of a graph $\Gamma$ does not lift to matroids more generally as its definition ineluctably involves the ranks of the connected components of $\Gamma$.
\end{remark}

The specializations and evaluations of $\mathcal{Q}(M;x,y,a,b)$ and $\widetilde{Q}(M;x,y,a,b)$ (and hence of $\mathcal{T}(M;\mathbf x,\mathbf y)$) that follow are related to the Tutte polynomial specializations
$$T(\Gamma;x+1,1)=\sum_{\stackrel{A\subseteq E}{r(A)=|A|}}x^{r(\Gamma)-|A|},$$
$$T(\Gamma;1,y+1)=\sum_{\stackrel{A\subseteq E}{r(A)=r(E)}}y^{|A|-r(\Gamma)},$$
giving respectively generating functions for spanning forests of $\Gamma$ according to their number of edges and for connected spanning subgraphs.
We need two more definitions though. 
A \emph{bridge} of a map $M=(V,E,F)$ is a an edge $e$ such that $M\!\setminus \!e$ has more components than $M$.
By definition a bridge is incident to only one face.
A \emph{dual bridge} of a map $M=(V,E,F)$ is a loop $e$ of $M$ such that $e$ is a bridge of the dual of $M$.

\begin{proposition}\label{prop:plane_quasi-trees_bouquets}
For a map $M$, 
$$\mathcal Q(M;x,0,1,1)=x^{2g(M)}\sum_{\stackrel{A\subseteq E: \:\mbox{\rm \tiny conn. cpts of}}{\mbox{\rm \tiny $M\backslash A^c$ plane quasi-trees}}}x^{r(M)-|A|}.$$
$$\mathcal Q(M;0, y,1,1)=y^{2g(M)}\sum_{\stackrel{A\subseteq E:\: \mbox{\rm \tiny conn. cpts of}}{\mbox{\rm \tiny $M/A$ plane bouquets}}}y^{r^*(M)-|A|}.$$
(A plane quasi-tree consists solely of bridges, and so corresponds to an embedding of a tree. A plane bouquet consists solely of dual bridges.)
\end{proposition}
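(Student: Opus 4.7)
My plan is to prove the first identity by a direct expansion of $\mathcal{Q}$ and to derive the second by surface duality.

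For the first identity, setting $y=0$ and $a=b=1$ in the subset expansion~\eqref{eq:Q} for $\mathcal{Q}(M;x,y,a,b)$ leaves only those $A\subseteq E$ with $n(M\backslash A^c)=0$, which is exactly the case when the underlying graph $(V(M),A)$ is a forest. For such $A$, Euler's relation forces $g(M\backslash A^c)=0$ and $f(M\backslash A^c)=k(M\backslash A^c)$, so each connected component of $M\backslash A^c$ is a plane quasi-tree (a tree embedded on a sphere with a single face); conversely, any plane quasi-tree is a tree, so a disjoint union of plane quasi-trees on $V(M)$ is precisely a submap whose underlying graph is a forest.

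The next step is to compute $n^*(M/A)$ for such a spanning forest $A$. Every edge of $A$ is a non-loop of $M$ and remains non-loop throughout the sequence of contractions, so $v(M/A)=v(M)-|A|$, $k(M/A)=k(M)$, and $g(M/A)=g(M)$; the genus identity is the equality case of Lemma~\ref{lem:genus_submaps}, whose two stated conditions reduce, after substituting $f(M\backslash A^c)=k(M\backslash A^c)$ and $|A|=v(M)-k(M\backslash A^c)$, to trivial identities. Hence $r(M/A)=r(M)-|A|$, and invoking~\eqref{eq:dual_nullity} gives
\[
n^*(M/A)=r(M/A)+2g(M/A)=r(M)+2g(M)-|A|.
\]
Substituting back and factoring out $x^{2g(M)}$ yields the first identity.

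For the second identity I would use the specialization of Proposition~\ref{prop:duality} to $\mathcal{Q}$, namely $\mathcal{Q}(M;x,y,a,b)=\mathcal{Q}(M^*;y,x,b,a)$, so that $\mathcal{Q}(M;0,y,1,1)=\mathcal{Q}(M^*;y,0,1,1)$. Applying the first identity to $M^*$ and translating via $g(M^*)=g(M)$, $r(M^*)=r^*(M)$, and the geometric duality $(M^*\backslash B^c)^*=M/B^c$: plane quasi-tree components of $M^*\backslash B^c$ correspond bijectively to plane bouquet components of $M/B^c$ (the surface dual of a plane quasi-tree is a plane bouquet, by swapping vertices and faces on the sphere). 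Reindexing the summation by $A=B^c$ then yields the second identity.

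The heart of the argument is the calculation $n^*(M/A)=r(M)+2g(M)-|A|$ for spanning forests, which is pure bookkeeping with the Euler parameters under contraction of non-loop edges, aided by Lemma~\ref{lem:genus_submaps} and the identity $n^*=r+2g$; the duality step for the second identity is then formal.
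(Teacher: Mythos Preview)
Your approach is essentially the paper's: establish the first identity directly by analysing when $n(M\backslash A^c)=0$ and then compute $n^*(M/A)$ for a spanning forest $A$, and derive the second identity from the first via the duality $\mathcal{Q}(M;0,y,1,1)=\mathcal{Q}(M^*;y,0,1,1)$. Your treatment of the first identity is correct and in one respect more explicit than the paper's: where the paper simply asserts that contracting such edges ``changes neither connectivity nor genus,'' you derive $g(M/A)=g(M)$ by checking the equality conditions in Lemma~\ref{lem:genus_submaps}, which is a valid alternative route to the same conclusion.

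There is, however, a genuine slip in your duality step. Applying the first identity to $M^*$ gives
\[
\mathcal{Q}(M^*;y,0,1,1)=y^{2g(M)}\sum_{B:\,M^*\backslash B^c\text{ plane quasi-trees}}y^{\,r^*(M)-|B|},
\]
and under $(M^*\backslash B^c)^*=M/B^c$ the condition becomes ``$M/B^c$ has plane bouquet components.'' When you reindex by $A=B^c$, the condition becomes the desired ``$M/A$ plane bouquets,'' but the exponent becomes $r^*(M)-|B|=r^*(M)-|A^c|=r^*(M)-|E|+|A|$, \emph{not} $r^*(M)-|A|$. For example, if $M$ is a single non-loop edge on two vertices in the sphere, only $A=E$ qualifies; your reindexed exponent is $0-1+1=0$, matching $\mathcal{Q}(M;0,y,1,1)=1$, whereas $r^*(M)-|A|=-1$ does not. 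The paper's proof also glosses over this step (``the second follows by duality''); the printed exponent $r^*(M)-|A|$ in the second formula appears to be a typo for $r^*(M)-|A^c|$. Your argument is sound up to the reindexing; it simply does not land on the formula as stated.
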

\begin{proof} 
From equation~\eqref{eq:Q} 
$$\mathcal Q(M;x,0,1,1)=\sum_{\stackrel{A\subseteq E:}{\mbox{\rm \tiny $n(M\backslash A^c)=0$}}}x^{n^*(M/A)}$$ 
$$\mathcal Q(M;0,y,1,1)=\sum_{\stackrel{A\subseteq E:}{\mbox{\rm \tiny $n^*(M/ A)=0$}}}y^{n(M\backslash A^c)}.$$ 


We prove the first identity in the proposition statement; the second follows by duality, with $\mathcal Q(M;0,y,1,1)=\mathcal Q(M^*;y,0,1,1)$.

By Euler's relation, $n(M\backslash A^c)=e(M\backslash A^c)-v(M\backslash A^c)+k(M\backslash A^c)=f(M\backslash A^c)-k(M\backslash A^c)+2g(M\backslash A^c)$ and $f(M\backslash A^c)\geq k(M\backslash A^c)$ with equality if and only if each component of $M\backslash A^c$ has just one face. Thus for $n(M\backslash A^c)=0$ to hold $M\backslash A^c$ must be a disjoint union of plane quasi-trees (i.e., trees). 
Given that the edges of $A$ form a disjoint union of plane quasi-trees, we have, beginning with Euler's relation,
\begin{align*}n^*(M/A)&=e(M/A)-f(M/A)+k(M/A)\\
 & = v(M/A)-k(M/A)+2g(M/A)\\
 & = v(M)-|A|-k(M)+2g(M)\\
&=2g(M)+r(M)-|A|,\end{align*} 
since contracting a bridge of a map changes neither connectivity nor genus. This establishes the result.
\end{proof}

\begin{corollary}\label{cor:plane_00}
For a map $M=(V,E,F)$ with underlying graph $\Gamma=(V,E)$, the constant term of the specialization $\mathcal Q(M;x,y,1,1)$ of $\mathcal{T}(M;\mathbf x,\mathbf y)$ is given by
$$\mathcal Q(M;0,0,1,1)=\begin{cases} T(\Gamma;1,1) &  \mbox{if $M$ is a plane embedding of planar graph $\Gamma$,}\\
 0 & \mbox{otherwise.}\end{cases}$$ 
Further, we have the following evaluations for any map $M=(V,E,F)$:
$$\mathcal Q(M;1,0,1,1)=\#\{A\subseteq E:\mbox{\rm connected components of } M\backslash A^c \:\mbox{\rm are plane quasi-trees}\},$$ 
$$\mathcal Q(M;0,1,1,1)=\#\{A\subseteq E:\mbox{\rm connected components of } M/A \:\mbox{\rm are plane bouquets}\}.$$ 
\end{corollary}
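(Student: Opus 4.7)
The plan is to derive the corollary directly from Proposition~\ref{prop:plane_quasi-trees_bouquets}, which already gives closed-form generating functions for the specializations $\mathcal Q(M;x,0,1,1)$ and $\mathcal Q(M;0,y,1,1)$. Two of the three evaluations are instantaneous substitutions, and the third requires a short counting argument.

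First I would handle the two single-zero evaluations. Setting $x=1$ in the first identity of Proposition~\ref{prop:plane_quasi-trees_bouquets} collapses the prefactor $x^{2g(M)}$ and every term $x^{r(M)-|A|}$ to $1$, leaving exactly the count of subsets $A\subseteq E$ such that every connected component of $M\backslash A^c$ is a plane quasi-tree. Setting $y=1$ in the second identity of Proposition~\ref{prop:plane_quasi-trees_bouquets} yields the dual statement for $M/A$.

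For the constant term $\mathcal Q(M;0,0,1,1)$, I would apply the first identity of Proposition~\ref{prop:plane_quasi-trees_bouquets} and set $x=0$. A subset $A$ contributes to $\mathcal Q(M;x,0,1,1)$ only if every component of $M\backslash A^c$ is a plane quasi-tree (equivalently, $M\backslash A^c$ is a disjoint union of trees in the underlying graph), and then its contribution is $x^{2g(M)+r(M)-|A|}$. For such $A$, the disjoint union of trees condition forces
\[|A|=v(M)-k(M\backslash A^c)\leq v(M)-k(M)=r(M),\]
so the exponent $2g(M)+r(M)-|A|$ is non-negative, and it vanishes only when both $g(M)=0$ and $|A|=r(M)$. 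Hence if $g(M)>0$ the evaluation at $x=0$ is $0$, while if $M$ is plane it counts the subsets $A\subseteq E$ for which $M\backslash A^c$ is a spanning forest of $\Gamma$; this is exactly the number counted by $T(\Gamma;1,1)$ via its standard subset expansion (the terms surviving $(x-1)^{r(\Gamma)-r(A)}(y-1)^{|A|-r(A)}$ at $x=y=1$).

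The main obstacle, such as it is, is simply to verify that $|A|\leq r(M)$ whenever $M\backslash A^c$ is a disjoint union of trees, which forces the prefactor $x^{2g(M)}$ to suppress all contributions once $g(M)>0$. Everything else is immediate bookkeeping on Proposition~\ref{prop:plane_quasi-trees_bouquets} together with Proposition~\ref{prop:surface_Tutte_plane} (or direct inspection of~\eqref{eq:tutte_e1}) to identify the spanning forest count with $T(\Gamma;1,1)$.
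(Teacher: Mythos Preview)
Your proposal is correct and follows essentially the same route as the paper: both derive the $(1,0)$ and $(0,1)$ evaluations by substituting into Proposition~\ref{prop:plane_quasi-trees_bouquets}, and both use the prefactor $x^{2g(M)}$ from that proposition to see that $\mathcal Q(M;0,0,1,1)=0$ when $g(M)>0$. The only minor difference is that for the plane case the paper simply invokes Proposition~\ref{prop:surface_Tutte_plane} to get $\mathcal Q(M;0,0,1,1)=T(\Gamma;1,1)$, whereas you also offer the direct count of maximal spanning forests via the exponent analysis $|A|\le r(M)$; your explicit verification that the exponents $2g(M)+r(M)-|A|$ are nonnegative is a detail the paper leaves implicit.
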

\begin{remark} \label{rmk:Tutte_anal} The last two evaluations are analogous (and for plane maps, identical) to the following Tutte polynomial evaluations for a graph $\Gamma=(V,E)$ giving for connected $\Gamma$ the number of spanning forests and number of connected spanning subgraphs: 
\begin{align*}T(\Gamma;2,1) & = \#\{A\subseteq E: n(\Gamma\backslash A^c)=0\}\\
& =\#\{A\subseteq E:\mbox{\rm connected components of } \Gamma\backslash A^c \:\mbox{\rm are trees}\},\end{align*}
and 
\begin{align*}T(\Gamma;1,2)& =  \#\{A\subseteq E: r(\Gamma\backslash A^c)=r(\Gamma)\}. \\
& =\#\{A\subseteq E:\mbox{\rm connected components of } \Gamma/A \:\mbox{\rm are single vertices with loops}\}.
\end{align*} 
\end{remark}
\begin{proof}[Proof of Corollary~\ref{cor:plane_00}]
This is immediate from Proposition~\ref{prop:plane_quasi-trees_bouquets} with $x=1$. 
By Proposition~\ref{prop:surface_Tutte_plane}, when $M$ is a plane embedding of planar $\Gamma$, $\mathcal{Q}(M;0,0,1,1)=T(\Gamma;1,1)$. 
\end{proof}

\begin{proposition}\label{prop:quasi-trees_bouquets}
For a map $M$,
$$\widetilde{\mathcal Q}(M;x,0,1,1)=\sum_{\stackrel{A\subseteq E: \:\mbox{\rm \tiny conn. cpts of}}{\mbox{\rm \tiny $M\backslash A^c$ quasi-trees}}}x^{r(M/A)},$$
$$\widetilde{\mathcal Q}(M;0, y,1,1)=\sum_{\stackrel{A\subseteq E:\: \mbox{\rm \tiny conn. cpts of}}{\mbox{\rm \tiny $M/A$ bouquets}}}y^{r^*(M\backslash A^c)}.$$
\end{proposition}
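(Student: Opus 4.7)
The plan is to prove both identities by reading off from the defining sum~\eqref{eq:surface_Tutte_renorm} for $\widetilde{\mathcal T}(M;\mathbf x,\mathbf y)$ which spanning subsets $A\subseteq E$ survive the specialization, and characterizing those subsets topologically using the Euler-type relations already established in Section~\ref{sec:tutte_maps}.

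First I would substitute $x_g=1$ and $y_g=1$ for all $g$ and $y=0$ into~\eqref{eq:surface_Tutte_renorm}, obtaining
\[
\widetilde{\mathcal Q}(M;x,0,1,1)=\sum_{\stackrel{A\subseteq E}{r^*(M\backslash A^c)=0}}x^{r(M/A)},
\]
since a term with a positive power of $y$ is killed. The task then reduces to identifying the indexing set. By definition $r^*(M\backslash A^c)=f(M\backslash A^c)-k(M\backslash A^c)$, so $r^*(M\backslash A^c)=0$ exactly when $f(M\backslash A^c)=k(M\backslash A^c)$, i.e.\ every connected component of the submap $M\backslash A^c$ has exactly one face. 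By Definition~\ref{def:qt-bouq} this says precisely that each component of $M\backslash A^c$ is a quasi-tree, which is the defining property of a quasi-forest in Section~\ref{sec:specializations}. This gives the first claimed identity.

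For the second identity I would proceed dually. Setting $x=0$ (and $x_g=y_g=1$) in~\eqref{eq:surface_Tutte_renorm} leaves only those $A$ with $r(M/A)=v(M/A)-k(M/A)=0$, i.e.\ $v(M/A)=k(M/A)$, meaning that every component of $M/A$ has a single vertex, which is to say is a bouquet (Definition~\ref{def:qt-bouq}). This yields
\[
\widetilde{\mathcal Q}(M;0,y,1,1)=\sum_{\stackrel{A\subseteq E:\:\mbox{\rm \tiny conn.\ cpts of}}{\mbox{\rm \tiny $M/A$ bouquets}}}y^{r^*(M\backslash A^c)},
\]
exactly the stated formula. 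Alternatively, one can derive it directly from the first identity applied to the dual map $M^*$, using that $n(M^*/A)=n^*(M\backslash A)$ etc.\ in the manner of Proposition~\ref{prop:duality} (noting that $\widetilde{\mathcal Q}$ obeys the analogous duality $\widetilde{\mathcal Q}(M^*;x,y,a,b)=\widetilde{\mathcal Q}(M;y,x,b,a)$, which is immediate from the same symmetry of the summand under $A\leftrightarrow A^c$ and surface duality).

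No serious obstacle is anticipated: the entire argument is a matter of reading off the vanishing conditions from the subset expansion and translating $r=0$ and $r^*=0$ into the combinatorial conditions ``components are bouquets'' and ``components are quasi-trees'' via the elementary identities $r=v-k$ and $r^*=f-k$. The only place where care is needed is to avoid conflating the two specializations and to invoke duality cleanly for the second identity.
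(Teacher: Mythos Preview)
Your argument is correct and is essentially identical to the paper's own proof: both simply read off from the defining subset expansion that setting $y=0$ (respectively $x=0$) forces $r^*(M\backslash A^c)=0$ (respectively $r(M/A)=0$), and then translate this via $r^*=f-k$ and $r=v-k$ into the condition that all components are quasi-trees (respectively bouquets). The only cosmetic difference is that the paper cites the expansion~\eqref{eq:Q_renorm} of $\widetilde{\mathcal Q}$ directly rather than specializing~\eqref{eq:surface_Tutte_renorm}.
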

\begin{proof} 
This follows from the defining equation~\eqref{eq:Q_renorm} for $\widetilde{\mathcal Q}(M;x,y,a,b)$ given in Definition~\ref{def:S} and the fact that $r^*(M\backslash A^c)=f(M\backslash A^c)-k(M\backslash A^c)=0$ if and only if each connected component of $M\backslash A^c$ has just one face, i.e. they are quasi-trees, and dually $r(M/A)=v(M/A)-k(M/A)=0$ if and only if the connected components of $M/A$ each have one vertex, i.e. they are bouquets. 
\end{proof}

\begin{corollary}\label{cor:Q_plane_1010}
For a map $M=(V,E,F)$,  
$$\widetilde{\mathcal Q}(M;1,0,1,1)=\#\{A\subseteq E:\mbox M\backslash A^c \:\mbox{\rm is a quasi-forest}\},$$ 
$$\widetilde{\mathcal Q}(M;0,1,1,1)=\#\{A\subseteq E: \mbox{\rm connected components of }M/A \:\mbox{\rm are bouquets}\}.$$
\end{corollary}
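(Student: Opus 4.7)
The plan is to derive this as an immediate specialization of Proposition~\ref{prop:quasi-trees_bouquets}. For the first identity, I would substitute $x=1$ into the first formula of that proposition, so that the monomial $x^{r(M/A)}$ collapses to $1$ for every surviving $A\subseteq E$. What remains is the plain count of subsets $A$ such that every connected component of $M\backslash A^c$ is a quasi-tree.

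Next, I would appeal to the definition of a quasi-forest given at the start of Section~\ref{sec:specializations}: a quasi-forest of $M$ is precisely a submap each of whose connected components is a quasi-tree. Since a submap of $M$ is by definition of the form $M\backslash A^c$ (it shares the vertex set of $M$), the count from the previous step is exactly $\#\{A\subseteq E : M\backslash A^c \text{ is a quasi-forest}\}$, which establishes the first identity.

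For the second identity, I would proceed analogously (or dually, invoking Proposition~\ref{prop:duality}), substituting $y=1$ in the second formula of Proposition~\ref{prop:quasi-trees_bouquets}. The monomial $y^{r^*(M\backslash A^c)}$ becomes $1$, and what remains is the count of subsets $A$ such that every connected component of $M/A$ is a bouquet in the sense of Definition~\ref{def:qt-bouq}. This is already the form of the stated identity, and no additional renaming is needed because the second identity of the corollary is phrased directly in terms of connected components of $M/A$ being bouquets rather than via a collective term analogous to ``quasi-forest''.

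There is no real obstacle: the corollary is a pure definition-chase from Proposition~\ref{prop:quasi-trees_bouquets}, the only verification being that ``submap each of whose components is a quasi-tree'' coincides with the definition of a quasi-forest, and that the ranges of summation in the two identities of the proposition indeed become indicator sets when the exponents $r(M/A)$ and $r^*(M\backslash A^c)$ are neutralized by the substitutions $x=1$ and $y=1$ respectively.
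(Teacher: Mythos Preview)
Your proposal is correct and matches the paper's approach: the paper states this corollary without proof, treating it as immediate from Proposition~\ref{prop:quasi-trees_bouquets}, which is exactly the specialization and definition-chase you describe.
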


The evaluations of Corollary~\ref{cor:Q_plane_1010} are analogous (and for plane maps identical) with the evaluations $T(\Gamma;2,1)$ and $T(\Gamma;1,2)$ of the Tutte polynomial as the number of spanning forests and connected spanning subgraphs of $\Gamma$. (Compare Remark~\ref{rmk:Tutte_anal} concerning the evaluations of Corollary~\ref{cor:plane_00}.)
%
%




%
%



\section{Enumerating flows and tensions}\label{sec:proofs}

In this section $G$ will be a finite group with identity~$1$. 

\begin{definition}
For a map $M=(V,E,F)$, let $p_G^1(M)$ denote the number of local $G$-tensions of $G$ (allowing the identity $1$) and $p_G(M)$ the number of nowhere-identity local $G$-tensions of $M$ (see Definition~\ref{def:flows_tensions}). 
Dually, let $q_G^1(M)$ denote the number of local $G$-flows of $M$ and $q_G(M)$ the number of nowhere-identity local $G$-flows of $M$.
\end{definition}
By definition of the dual map 
we have
\begin{equation*}
	q_G^1(M)=p_G^1(M^*)
\end{equation*}
and 
\[q_G(M)=p_G(M^*).\]

The main goal of this section is to establish formulas for $q_G(M)$ and $p_G(M)$ in terms of the size of the group $G$ and the dimensions of its irreducible representations.

By partitioning local $G$-flows according to the set of edges $A$ on which the flow value is equal to the identity,
$$q_G^1(M)=\sum_{A\subseteq E}q_G(M\backslash A).$$
Then by the inclusion-exclusion principle
\begin{equation}
q_G(M)=\sum_{A\subseteq E}(-1)^{|A^c|}q_G^1(M\backslash A^c).\label{eq:in-ex}
\end{equation}
So we see that it suffices to find formulas for $q_G^1(M)$, which is the main focus of the following subsection.
In Subsection~\ref{ssec:n1flows} we give formulas for $q_G(M)$ and $p_G(M)$ and discuss some special cases.

\subsection{Enumerating local $G$-flows}\label{ssec:flows}
Here we establish the following result:
\begin{theorem}\label{thm:flow count}
Let $G$ be a finite group with irreducible representation of dimensions $n_\ell$.
Let $M=(V,E,F)$ be a connected map.
Then the number of local $G$-flows of $M$ is given by
\begin{equation}\label{eq:flow count}
q^1_G(M)=|G|^{|E|-|V|}\sum_{\ell} n_\ell^{\chi(M)}.
\end{equation}
\end{theorem}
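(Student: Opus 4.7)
The plan is to compute $q_G^1(M)$ by Fourier analysis over $G$. The starting identity is $\delta_{g,1}=\tfrac{1}{|G|}\sum_{\ell}n_\ell \chi_\ell(g)$, where the sum is over the irreducible representations $\rho_\ell$ of $G$ with characters $\chi_\ell$ and dimensions $n_\ell$. Writing $w_v(\mathbf g)\in G$ for the cyclic anticlockwise product around $v$ of edge values (with inversion on incoming edges), so that a local $G$-flow is an assignment $\mathbf g\in G^E$ with $w_v(\mathbf g)=1$ at every vertex, one has
$$q_G^1(M)=\sum_{\mathbf g\in G^E}\prod_{v\in V}\delta_{w_v(\mathbf g),1} = \frac{1}{|G|^{|V|}}\sum_{(\rho_v)\in \mathrm{Irr}(G)^V}\prod_{v\in V}n_{\rho_v}\cdot\sum_{\mathbf g\in G^E}\prod_{v\in V}\chi_{\rho_v}(w_v(\mathbf g)).$$

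Next, I would expand each trace $\chi_{\rho_v}(w_v)$ as a sum of products of matrix coefficients of $\rho_v$ indexed by corner indices $k_1,\dots,k_{\deg v}$ at $v$, one per gap between consecutive incident edges. After exchanging orders of summation, the sum over $g_e$ for each edge $e=u\to v$ takes the form $\sum_{g\in G}\rho_u(g)_{ij}\rho_v(g^{-1})_{kl}$, and Schur orthogonality evaluates this to $\tfrac{|G|}{n_{\rho_u}}\delta_{il}\delta_{jk}$ when $\rho_u=\rho_v$, and to $0$ otherwise. Since $M$ is connected, only configurations with a single common irrep $\rho_v=\rho_\ell$ at every vertex survive, contributing an overall factor $(|G|/n_\ell)^{|E|}$.

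The remaining task is to evaluate the sum over corner indices once these deltas have been imposed. This is a purely topological count: the two deltas per edge glue the two sides of the ribbon of that edge, and the cyclic rotation at each endpoint determines how the strands turn at corners, so that the closed strand cycles formed by following the identifications are precisely the facial walks of $M$. Hence the surviving indices are one per face, each ranging over $\{1,\dots,n_\ell\}$, contributing a factor $n_\ell^{|F|}$. Assembling the three contributions yields
$$q_G^1(M)=\frac{1}{|G|^{|V|}}\sum_{\ell}n_\ell^{|V|}\Bigl(\tfrac{|G|}{n_\ell}\Bigr)^{|E|}n_\ell^{|F|}=|G|^{|E|-|V|}\sum_{\ell}n_\ell^{|V|-|E|+|F|},$$
which is the claimed formula since $\chi(M)=|V|-|E|+|F|$.

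The main obstacle is making the strand-to-face step rigorous: one must check that the pair of delta identifications arising from Schur at each edge glues the corner indices on opposite sides of the ribbon in precisely the way the rotation system dictates, so that a single closed strand cycle traces out each facial walk. This is the ribbon-graph (``'t Hooft'') diagrammatic recipe familiar from matrix-integral computations, but the orientation bookkeeping forced by the convention ``anticlockwise, invert on incoming edges'' of Definition~\ref{def:flows_tensions} must be handled carefully to ensure that left and right sides of each edge are glued consistently, and that loops contribute one rather than two identifications at their unique endpoint.
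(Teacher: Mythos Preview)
Your proposal is correct and follows essentially the same approach as the paper: expand each vertex constraint via the regular-character identity, decompose into irreducibles, expand traces into matrix coefficients, apply Schur orthogonality edge by edge to force a single irreducible $\rho_\ell$ on all vertices (by connectivity), and then identify the surviving index assignments with faces to obtain the factor $n_\ell^{|F|}$. The paper phrases the last step as ``$\psi_u(e)$ depends only on the face to which $(u,e)$ belongs'' rather than in your ribbon/'t~Hooft strand language, but the content is identical, and your caveat about orientation bookkeeping is precisely the detail the paper handles with its $\eps_v$ and $\pi_v$ notation.
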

\begin{remark}
This result has already implicitly appeared in \cite{MY05}, but we will give a proof below so as to make 
our paper as self contained as possible. 
\end{remark}
Before giving a proof of Theorem~\ref{thm:flow count}, let us further remark that it includes a remarkable result due to Mednyh \cite{M78} (Theorem~\ref{thm:Mednyh} here), which we now explain.
Let $M_{g}$ be the map given by a single vertex with $2g$ loops $e_1,\ldots,e_{2g}$ attached to it such that the vertex rotation is given by $(e_1,e_2,e_1,e_2,\ldots,e_{2g-1},e_{2g},e_{2g-1},e_{2g}$). (Note that the genus of $M_g$ is clearly equal to $g$.)
A local $G$-flow of $M_{g}$ is a solution in $G$ to the equation
\begin{equation}\label{eq:Gflow_Dg0} 
[a_1,b_1]\cdots [a_g,b_g]=1,
\end{equation}
where $[a,b]=aba^{-1}b^{-1}$ is the commutator of $a$ and $b$ in $G$.

If $\Sigma$ is an orientable compact surface of genus $g$, its fundamental group $\pi_1(\Sigma)$ has the presentation  
$$\pi_1 (\Sigma)\cong\langle a_1, b_1, \dots, a_g, b_g \;:\: [a_1,b_1]\cdots [a_g,b_g]=1\rangle.$$
(See for example~\cite[p.51]{H02}.) This implies that solutions in $G$ to equation~\eqref{eq:Gflow_Dg0} are exactly the  homomorphisms from $\pi_1(\Sigma)$ to $G$.
Let us denote the set all homomorphisms from $\pi_1(\Sigma)$ to $G$ by ${\rm Hom}(\pi_1(\Sigma,G)$.
Thus we have $q^1_G(M_g)=|{\rm Hom}(\pi_1(\Sigma),G)|$, and so Theorem~\ref{thm:flow count} implies the following result of 
Frobenius~\cite{F1896} (for $g=1$) and Mednyh~\cite{M78} (for $g>1$) (see e.g. the survey~\cite[Sect. 7]{J98} for more details):
%
%
\begin{theorem}\label{thm:Mednyh} Let $\Sigma$ be a surface of genus $g>0$ and $G$ a finite group with dimensions of irreducible representations $n_\ell$. Then
  $$\frac{|{\rm Hom}(\pi_1(\Sigma),G)|}{|G|}=\sum_{\ell}\left(\frac{|G|}{n_\ell}\right)^{2g-2}.$$
\end{theorem}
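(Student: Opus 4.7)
The plan is to derive Theorem~\ref{thm:Mednyh} as an immediate corollary of Theorem~\ref{thm:flow count} applied to the specific map $M_g$, using the identification---already noted in the excerpt---between local $G$-flows of $M_g$ and homomorphisms $\pi_1(\Sigma)\to G$. All the real combinatorial and representation-theoretic content sits in Theorem~\ref{thm:flow count}; Mednyh's formula is then just a parameter computation and an algebraic rearrangement.

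First I would record the map parameters of $M_g$. By construction $v(M_g)=1$ and $e(M_g)=2g$, and $M_g$ has genus $g$, so Euler's relation~\eqref{eq:euler_rel} gives $\chi(M_g)=2-2g$ (equivalently $f(M_g)=1$, i.e.\ $M_g$ is a quasi-tree). Plugging these into~\eqref{eq:flow count}:
\[
q_G^1(M_g) \;=\; |G|^{2g-1}\sum_{\ell} n_\ell^{\,2-2g}.
\]

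Next I would make explicit the identification $q_G^1(M_g)=|{\rm Hom}(\pi_1(\Sigma),G)|$. By Definition~\ref{def:flows_tensions} together with the rotation scheme $(e_1,e_2,e_1,e_2,\dots,e_{2g-1},e_{2g},e_{2g-1},e_{2g})$ specifying $M_g$, a local $G$-flow of $M_g$ is exactly a tuple $(a_1,b_1,\dots,a_g,b_g)\in G^{2g}$ (the values assigned to the loops) satisfying the single Kirchhoff-style relation~\eqref{eq:Gflow_Dg0} at the unique vertex. On the other hand, the standard presentation of $\pi_1(\Sigma)$ recalled above shows that a homomorphism $\pi_1(\Sigma)\to G$ is uniquely determined by, and freely corresponds to, a choice of such a tuple satisfying exactly that same relation. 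Hence the two counts coincide.

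Finally I would divide the displayed formula through by $|G|$ and regroup the exponents:
\[
\frac{|{\rm Hom}(\pi_1(\Sigma),G)|}{|G|} \;=\; |G|^{2g-2}\sum_{\ell} n_\ell^{\,2-2g} \;=\; \sum_{\ell}\left(\frac{|G|}{n_\ell}\right)^{\!2g-2},
\]
which is the claimed identity. The only genuine obstacle in this chain is the proof of Theorem~\ref{thm:flow count} itself (addressed in the preceding subsection using character orthogonality, and in particular the identity $\sum_\ell n_\ell^2=|G|$); conditional on that theorem, Mednyh's formula is just a substitution with the parameters of $M_g$.
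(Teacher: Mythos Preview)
Your proposal is correct and follows exactly the route the paper takes: the paper does not give a separate proof of Theorem~\ref{thm:Mednyh} but simply observes, just as you do, that $q_G^1(M_g)=|{\rm Hom}(\pi_1(\Sigma),G)|$ and then reads off the formula from Theorem~\ref{thm:flow count} with $|V|=1$, $|E|=2g$, $\chi(M_g)=2-2g$. Your computation and rearrangement match the paper's intended derivation.
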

\begin{remark} The numbers $\frac{|G|}{n_\ell}$ in Theorem~\ref{thm:Mednyh} are positive integers.

When $g=0$ the formula still holds: when $\Sigma$ is the sphere, $|{\rm Hom}(\pi_1(\Sigma),G)|=1=\frac{1}{|G|}\sum_\ell n_\ell^2$, which is a well known result from representation theory~\cite[p. 18, Corollary~2]{Ser}.
\end{remark} 
\begin{remark}
Although Mednyh's theorem is a consequence of our Theorem~\ref{thm:flow count}, it also possible to derive  Theorem~\ref{thm:flow count} from Mednyh's theorem, cf. \cite{MY05}.
\end{remark}

We now turn to the proof of Theorem \ref{thm:flow count}.
As mentioned above we prove it using representation theory. 
We refer the reader to the book by Serre~\cite{Ser} for definitions and background on representation theory. 
(In fact, the first twenty pages of~\cite{Ser} contain everything that we need.)
We shall use the following three facts.
The regular character $\chi_\reg$ of a finite group $G$ satisfies (\cite[p.18, Proposition~2]{Ser})
{\begin{equation}\label{eq:one to reg}
\chi_\reg(g)=\left\{\begin{array}{cl} |G| &\text{ if } g=1,\\ 0&\text{ otherwise}\end{array}\right.
\end{equation}
for all $g\in G$.
Let $C(G)$ denote the set of all irreducible representations (up to isomorphism) of $G$. 
Then (\cite[p.18, Corollary 1]{Ser}
\begin{equation}
\chi_\reg=\sum_{\rho\in C(G)}\dim(\rho)\chi_\rho.	\label{eq:character decomposition}
\end{equation}
Let $\rho,\rho'\in C(G)$, which we assume to be in matrix form.
Then for $i,j\in [\dim(\rho)]$ and $i',j'\in [\dim(\rho')]$ we have (\cite[p.14, Corollary 3]{Ser})
\begin{equation}
\sum_{g\in G} \rho(g)_{i,j}\rho'(g^{-1})_{j',i'}=\left\{\begin{array}{cl} \frac{|G|}{\dim(\rho)}& \text{ if } \rho=\rho',i=i'\text{ and } j=j',\\0&\text{otherwise.} \end{array}\right. \label{eq:orthogonality}
\end{equation}

For a map $M=(V,E,F)$ we denote the vertex rotation at $v\in V$ (of the multiset of edges containing $v$ $\delta(v)$) 
by $\pi_v$. So for an edge $e\in \delta(v)$ $\pi_v(e)$ is the next edge in $\delta(v)$ with respect to the anticlockwise cyclic order with respect to the orientation of the surface.
We say that for $v\in V$ and $e\in \delta(v)$ the pair $(v,e)$ \emph{belongs} to a face $f$ of $M$ if $f$ is to the left 
of $e$ when traversing $e$ towards $v$.

Now we are ready to prove the result. 
\begin{proof}[Proof of Theorem~\ref{thm:flow count}]
Recall that we assume that the edges of $M$ have been given an arbitrary direction.
Let us define for $v\in V$ $\eps_v:\delta(v)\to \{-1,1\}$ by 
\[
\eps_v(e)=\left \{\begin{array}{rl}-1& \text{ if } e \text{ is an incoming arc, }\\1&\text{ if } e \text{ is an outgoing arc.}\end{array}\right.
\]
Let us denote the vertices and edges of the underlying graph of $M$ by $V$ and $E$ respectively. Then we can write 
\[
q^1_G(M)=\sum_{\phi:E\to G}\prod_{v\in V} {\bf1}\bigg(\prod_{e\in \delta(v)}\phi(e)^{\eps_v(e)}\bigg),
\]
where ${\bf1}(g)$ is equal to $1$ if and only if $g=1$ and zero otherwise.
By \eqref{eq:one to reg} we can rewrite this as follows
\begin{align*}
q^1_G(M)=\sum_{\phi:E\to G}|G|^{-|V|}\prod_{v\in V} \chi_\reg\bigg(\prod_{e\in \delta(v)}\phi(e)^{\eps_v(e)}\bigg).
\end{align*}
Now using \eqref{eq:character decomposition} we obtain that 
\begin{align*}
|G|^{|V|}q^1_G(M)=&\sum_{\phi:E\to G}\prod_{v\in V} \sum_{\rho\in C(G)}\dim(\rho)\chi_\rho\bigg(\prod_{e\in \delta(v)}\phi(e)^{\eps_v(e)}\bigg)
\\
=&\sum_{\phi:E\to G}\sum_{\kappa:V\to C(G)}\prod_{v\in V}\dim(\kappa(v))\chi_{\kappa(v)}\bigg(\prod_{e\in \delta(v)}\phi(e)^{\eps_v(e)}\bigg).
\end{align*}
Let us now fix an assignment $\kappa:V\to C(G)$ and look at its contribution to the sum above.
This contribution is given by (using that representations are multiplicative and where $\tr$ denotes the trace)
\begin{align}
&\sum_{\phi:E\to G}\prod_{v\in V}\dim(\kappa(v))\tr\bigg(\prod_{e\in \delta(v)}\kappa(v)\big(\phi(e)^{\eps_v(e)}\big)\bigg)=\label{eq:contribution}
\\
&\sum_{\phi:E\to G}\prod_{v\in V}\dim(\kappa(v))\cdot \sum_{\psi:\delta(v)\to [\dim(\kappa(v))]} \bigg(\kappa(v)\big(\phi(e)^{\eps_v(e)}\big)\bigg)_{\psi(e),\psi(\pi_v(e))}=\nonumber
\\
&\sum_{\phi:E\to G}\sum_{\substack{v\in V\\\psi_v:\delta(v)\to\dim[\kappa(v)]}} \prod_{v\in V}\dim(\kappa(v))\cdot \prod_{e\in \delta(v)}\bigg(\kappa(v)\big(\phi(e)^{\eps_v(e)}\big)\bigg)_{\psi_v(e),\psi_v(\pi_v(e))}=\nonumber
\\
&\sum_{\substack{v\in V\\\psi_v:\delta(v)\to\dim[\kappa(v)]}}\prod_{v\in V}\dim(\kappa(v))\cdot \prod_{e=(u,v)\in E}\cdot \sum_{g\in G}(\kappa((u)(g))_{\psi_u(e),\psi_u(\pi_u(e))}(\kappa(v)(g^{-1}))_{\psi_v(e),\psi_v(\pi_v(e)))}.\nonumber
\end{align}
By \eqref{eq:orthogonality}, the last sum on the last line of \eqref{eq:contribution} is zero unless $\kappa(v)=\kappa\in C(G)$ for all $v\in V$ and additionally $\psi_u(e)=\psi_v(\pi_v(e)))$ and $\psi_v(e)=\psi_u(\pi_u(e)))$.
So to get a nonzero contribution we need that $\psi_u(e)$ only depends on the face of $M$ to which $(u,e)$ it belongs and that $\kappa$ is constant on $V$. 
Each such assignment $\psi$ then gives a contribution of $\dim(\kappa)^{|V|-|E|}|G|^{|E|}$ and so in total for 
constant $\kappa$ we get that \eqref{eq:contribution} is equal to $\dim(\kappa)^{|V|+|F|-|E|}|G|^{|E|}$.
Summing this over all irreducible representations of $G$ and dividing by $|G|^{|V|}$, we obtain the desired expression for $q^1_G(M)$.
\end{proof}

\subsection{Enumerating nowhere identity $G$-flows and tensions}\label{ssec:n1flows}
 In the previous subsection we enumerated local $G$-flows. 
We shall now use this to prove the following result, which immediately implies Theorem~\ref{thm:spec_flows_tensions}. 
\begin{theorem}\label{thm:n1flows}
Let $G$ be a group with irreducible representations of dimensions $n_\ell$.
\begin{enumerate}
\item[(i)] The number of nowhere-identity local $G$-flows of a map $M=(V,E,F)$ is given by 
\begin{equation}\label{eq:n1flows} 
q_G(M)=\sum_{A\subseteq E}(-1)^{|A^c|}|G|^{n(M\backslash A^c)}\prod_{\mbox{\rm \tiny conn. cpts $M_j$ of $M\backslash A^c$}}\left(\frac{1}{|G|}\sum_\ell n_\ell^{\chi(M_j)}\right),
\end{equation}
where $A^c=E\setminus A$ and $\chi(M_j)=2-2g(M_j)$. 
\item[(ii)] The number of nowhere-identity local $G$-tensions of a map $M=(V,E,F)$ is given by 
$$p_G(M)=\sum_{A\subseteq E}(-1)^{|A|}|G|^{n^*(M/A))}\prod_{\mbox{\rm \tiny conn. cpts $M_i$ of $(M/A)$}}\left(\frac{1}{|G|}\sum_\ell n_\ell^{\chi(M_i)}\right).$$
\end{enumerate}
\end{theorem}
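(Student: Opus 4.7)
The plan is to deduce part (i) directly from the inclusion--exclusion identity~\eqref{eq:in-ex} together with the connected-map formula of Theorem~\ref{thm:flow count}. The one new ingredient needed is that $q_G^1$ is multiplicative over disjoint unions of maps: since the local $G$-flow condition is imposed only at vertices, a local $G$-flow on $M_1\sqcup M_2$ is precisely a pair consisting of a local $G$-flow on $M_1$ and a local $G$-flow on $M_2$. Thus for any $A\subseteq E$, if $M_1,\ldots,M_k$ denote the connected components of $M\backslash A^c$ (so that $k=k(M\backslash A^c)$), Theorem~\ref{thm:flow count} applied componentwise gives
\[q_G^1(M\backslash A^c)=\prod_{j=1}^{k}|G|^{e(M_j)-v(M_j)}\sum_{\ell}n_\ell^{\chi(M_j)}.\]

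Summing the exponents of $|G|$ over $j$ and using $e(M\backslash A^c)-v(M\backslash A^c)=n(M\backslash A^c)-k(M\backslash A^c)$, one factor of $|G|^{-1}$ can be distributed to each connected component, yielding
\[q_G^1(M\backslash A^c)=|G|^{n(M\backslash A^c)}\prod_{j=1}^{k}\left(\frac{1}{|G|}\sum_{\ell}n_\ell^{\chi(M_j)}\right).\]
Substituting this into~\eqref{eq:in-ex} directly gives the formula in part (i).

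For part (ii), the plan is to invoke the duality $p_G(M)=q_G(M^*)$ already recorded at the start of Section~\ref{sec:proofs} and apply the formula from (i) to $M^*$. Reindexing the resulting sum by $A=B^c$, where $B\subseteq E(M^*)=E(M)$, converts the sign factor $(-1)^{|B^c|}$ into $(-1)^{|A|}$. Since deletion in the dual corresponds to contraction in the primal, $M^*\backslash B^c=(M/A)^*$; hence $n(M^*\backslash B^c)=n^*(M/A)$, the connected components of $M^*\backslash B^c$ are in natural bijection with those of $M/A$, and geometric duality preserves the genus (and therefore the Euler characteristic) of each connected component. Collecting these identifications produces the formula in (ii).

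The main step is the reduction to Theorem~\ref{thm:flow count} via multiplicativity; once this is settled, both parts reduce to careful bookkeeping of exponents and a single duality substitution. The only real risk is miscounting powers of $|G|$: one must verify that each connected component of $M\backslash A^c$ contributes a factor $|G|^{e(M_j)-v(M_j)}$, that the sum of these exponents equals $n(M\backslash A^c)-k(M\backslash A^c)$, and that the residual $|G|^{-k(M\backslash A^c)}$ is precisely absorbed into the $k(M\backslash A^c)$ component-indexed factors of $\tfrac{1}{|G|}\sum_\ell n_\ell^{\chi(M_j)}$.
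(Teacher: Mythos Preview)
Your proposal is correct and follows essentially the same route as the paper: part~(i) combines the inclusion--exclusion identity~\eqref{eq:in-ex} with Theorem~\ref{thm:flow count}, and part~(ii) is obtained from~(i) via the duality $p_G(M)=q_G(M^*)$ together with $(M^*\backslash A)^*=M/A$ and a reindexing of the sum. If anything, you are more explicit than the paper in spelling out the multiplicativity of $q_G^1$ over connected components (needed because Theorem~\ref{thm:flow count} is stated only for connected maps) and the bookkeeping that turns $|G|^{e-v}$ per component into $|G|^{n(M\backslash A^c)}$ times one factor of $|G|^{-1}$ per component.
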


\begin{remark}
The subset $A$ is complemented in the summation~\eqref{eq:n1flows} for later convenience. The submap $M\backslash A^c$ is the restriction of $M$ to edges in $A$. 
\end{remark}
\begin{proof}
The statement for $q_G(M)$ follows directly from Theorem~\ref{thm:flow count} by \eqref{eq:in-ex}.
To prove the statement for $p_G(M)$, note that $e(M^*\backslash A^c)=|A|=e(M/A^c)$, $k(M^*\backslash A^c)=k((M^*\backslash A^c)^*)=k(M/A^c)$, and $v(M^*\backslash A^c)=f((M^*\backslash A^c)^*)=f(M/A^c)$, and so we see that $n(M^*\backslash A^c)=n^*(M/A^c)$.
Then, as the components of $M^* \setminus A^c$ are the components of $(M^*\setminus A^c)^*=M/A^c$, using that $p_G(M)=q_G(M^*)$, and replacing $A^c$ by $A$, we arrive at the desired equality for $p_G(M)$.
\end{proof}

We now discuss some special cases of Theorem~\ref{thm:n1flows}.
\begin{example}\label{prop:plane_local_flows} 
If $M=(V,E,F)$ is a plane map, $\chi(M_j)=2$ for all choices of $A$ and $j$ in equation~\eqref{eq:n1flows}.
Combining this with the fact that $\sum n_\ell^2=|G|$, $q_G(M)$ coincides for arbitrary finite group~$G$ with the flow polynomial of the planar graph $\Gamma=(V,E)$ evaluated at $|G|$. 

Similarly, $p_G(M)$ also depends only on $|G|$.
This can also be seen from that for a plane map $M$ local tensions are global, and we have the correspondence between $G$-tensions and vertex $G$-colourings of the graph underlying $M$: this make evident that the number of nowhere-identity $G$-tensions of a plane map depends only on $|G|$ and not on the structure of $G$ (proper vertex $G$-colourings depend only on distinctness of colours, that is, on $G$ as a set). 
\end{example}
\begin{example}\label{ex:abelian_local_flows}
When $G$ is abelian, $n_\ell=1$ for $i=1,\dots, |G|$, and as $\sum_{i=1}^{|G|} 1^{\chi(M_j)}=|G|$ we obtain for any map $M$ the same expression as for plane maps with arbitrary finite group~$G$,
$$q_G(M)=\sum_{A\subseteq E}(-1)^{|A^c|}|G|^{n(M\backslash A^c)} =\phi(\Gamma;|G|),$$ where $\phi$ denotes the flow polynomial.
Thus the number of local abelian $G$-flows of a map $M$ is equal to the number of global $G$-flows of $M$, evaluated by the flow polynomial of the graph underlying $M$. 

\end{example}

\begin{example}[Dihedral group] \label{ex:dihedral} Let $G=D_{2n}=\langle r,s|s^2=r^n=1, srs=r^{-1}\rangle$ be the dihedral group of order $2n$. The number of nowhere-identity $D_{2n}$-flows of a map $M$ are given by a quasi-polynomial in $n$ of period $2$: 

\begin{itemize}
\item[(1)] $n$ odd: There are two 1-dimensional irreducible representations of $D_{2n}$, the remaining $\frac{n-1}{2}$ being $2$-dimensional. This corresponds to $\frac{n+3}{2}$ conjugacy classes in $D_{2n}$. Setting $n_i=1$ for $i=1,2$ and $n_i=2$ for $i=3,4\dots, \frac{n+3}{2}$, we obtain
$$q_{D_{2n}}(M)=\sum_{A\subseteq E}(-1)^{|E\backslash A|}(2n)^{|E\backslash A|-|V|}\prod_{\mbox{\rm \tiny conn. cpts $M_j$ of $M\backslash A$}}\left(2+\frac{n-1}{2}2^{\chi(M_j)}\right).$$
\item[(2)] $n$ even: There are four 1-dimensional irreducible representations of $D_{2n}$, the remaining $\frac{n}{2}-1$ being $2$-dimensional. This corresponds to $\frac{n}{2}+3$ conjugacy classes in $D_{2n}$. Setting $n_i=1$ for $i=1,2,3,4$ and $n_i=2$ for $i=5,6\dots, \frac{n}{2}+3$, we obtain
$$q_{D_{2n}}(M)=\sum_{A\subseteq E}(-1)^{|E\backslash A|}(2n)^{|E\backslash A|-|V|}\prod_{\mbox{\rm \tiny conn. cpts $M_j$ of $M\backslash A$}}\left(4+\frac{n-2}{2}2^{\chi(M_j)}\right).$$
\end{itemize}
\end{example}

\begin{remark} The flow polynomial of a graph $\Gamma$ evaluated at $n\in\mathbb N$ enumerates the number of nowhere-zero $\mathbb Z_n$-flows of $\Gamma$. 
A {\em nowhere-zero $n$-flow} is a $\mathbb Z$-flow that takes values in $\{\pm 1,\pm 2,\dots, \pm (n-1)\}$. Tutte~\cite{T49} showed that the existence of a nowhere-zero $\mathbb Z_n$-flow of $\Gamma$ is equivalent to the existence of a nowhere-zero $n$-flow of $\Gamma$. Kochol~\cite{K02} showed that the number of nowhere-zero $n$-flows is in general larger than the number of nowhere-zero $\mathbb Z_n$-flows, but still given by a polynomial in $n$.
Example~\ref{ex:dihedral} shows that the number of nowhere-identity $D_{2n}$-flows is a quasi-polynomial in $n$ of period $2$. The dihedral group $D_{2n}$ is isomorphic to its representation in $GL_2(\mathbb Z_n)$ as
$\{\left(\begin{array}{cc} \pm 1 & x\\ 0 & 1\end{array}\right):x\in\mathbb Z_n\}.$
Defining
$$\mathbb D=\left\{\left(\begin{array}{cc} \pm 1 & x\\ 0 & 1\end{array}\right):x\in\mathbb Z\right\},$$
it would be interesting to establish whether a nowhere-identity $D_{2n}$-flow exists if and only if a $\mathbb D$-flow exists with $x\in\{\pm 1,\pm 2,\dots, \pm (n-1)\}$ (to match Tutte's equivalence for the existence of nowhere-zero $\mathbb Z_n$-flows), and furthermore whether the latter are counted by a quasi-polynomial in $n$ of period 2 (analogous to Kochol's integer flow polynomial). 
\end{remark}

DeVos~\cite{D00} appears to be the first to have considered the problem of counting nonabelian flows. 
In \cite[Lemma 6.1.6]{D00}, DeVos argues directly that the number of nowhere-identity $D_8$-flows is equal to the number of nowhere-identity $Q_8$-flows, where $Q_8$ is the quaternion group, with presentation $\langle r,s|r^4=1, s^2=r^2=srs^{-1}r^{-1}\rangle$. 
This can be seen from Theorem~\ref{thm:n1flows} by observing that $D_{8}$ and $Q_8$ each have irreducible representations dimensions $1,1,1,1,2$.

Let $G'=\langle xyx^{-1}y^{-1}:x,y\in G\rangle$ denote the commutator subgroup of $G$.
Let us call an edge $e$ of a map connected $M$ a \emph{plane-sided bridge} if deleting $e$ from $M$ results in a disconnected map one whose components is plane, i.e., has genus zero.
The main result DeVos proves concerning nonabelian flows is the following: 
\begin{theorem}{\rm \cite[Theorem 6.0.7]{D00}}\label{thm:DeVos}
Let $M$ be a connected map and $G$ a nonabelian group. \begin{itemize}
\item[(1)] If $|G'|>2$ then $M$ has a nowhere-identity local $G$-flow if and only if $M$ has no plane-sided bridge. 
\item[(2)] If $|G'|=2$ and $G\not\in\{D_8,Q_8\}$ then $M$ has a nowhere-identity local $G$-flow if and only if $M$ has no odd-sized subset $B$ of bridges of $M$ such that each $e\in B$ is a plane-sided bridge of $M\backslash (B\backslash e)$. 
\item[(3)] If $G\in\{D_8,Q_8\}$ then $M$  has  a nowhere-identity local $G$-flow if $M$ has no bridge, but it is NP-complete to decide if $M$ with a bridge has  a nowhere-identity local $G$-flow.
\end{itemize}
\end{theorem}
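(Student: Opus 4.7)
The plan is to isolate the algebraic obstruction a bridge imposes on any local $G$-flow and then construct flows whenever this obstruction is inactive.

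I would begin with the necessity direction in (1) and (2). The key observation is the duality noted in Example~\ref{prop:plane_local_flows}: on a plane map local $G$-flows coincide with global $G$-flows, so every cutset carries a well-defined boundary word. Thus if $e$ is a plane-sided bridge and $M_1$ is the plane component of $M\setminus e$ together with the endpoint of $e$, then $\{e\}$ is a singleton cutset of $M_1\cup\{e\}$; multiplying the vertex Kirchhoff relations of $M_1$ in a rotation-compatible order expresses $\phi(e)$ as a product of commutators, so $\phi(e)\in G'$. For $|G'|=1$ this would already force $\phi(e)=1$, but since we are in the nonabelian case the sharper statement in (1) is that a plane-sided bridge forces $\phi(e)$ into a single coset whose only element is $1$ once one tracks the cutset product carefully (the plane side has trivial total holonomy). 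For (2), an iterated version of the same argument applied to $B\subseteq E$---deleting in turn the other edges of $B$ and using that each remaining one is plane-sided---shows that the ordered product of $\phi$-values on $B$ is constrained to lie in $G'=\{1,c\}$ in a way that, for $|B|$ odd, is incompatible with all $\phi(e)\ne 1$; this is the obstruction.

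For the sufficiency directions I would proceed by induction on the number of bridges. In the bridgeless base case, pick a spanning quasi-tree $T$ (which exists by Proposition~\ref{prop:genus_xy} applied with $h=g(M)$), freely assign non-identity values to edges in $E\setminus T$, and solve the vertex Kirchhoff equations along $T$. Solvability with prescribed nonzero boundary is supplied by Theorem~\ref{thm:Mednyh}: the Frobenius--Mednyh count $|G|^{2g-1}\sum_\ell (|G|/n_\ell)^{2g-2}$ is strictly positive for every $g\ge 1$ whenever $G$ has at least one irreducible representation, from which one extracts surjectivity of the boundary-evaluation map from local $G$-flows on a positive-genus handle onto $G$. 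The inductive step glues across a bridge $e$ both of whose sides have positive genus (the only possibility in (1) when no plane-sided bridge exists): apply the induction hypothesis to each side with a common prescribed value in $G\setminus\{1\}$ for $e$. Case (2) requires a parity-refined gluing: absence of an odd ``bad'' set $B$ means the forced obstruction elements in $G'$ can be cancelled in pairs, and the hypothesis $G\notin\{D_8,Q_8\}$ guarantees that $|G/G'|\ge 4$ carries a sufficiently rich non-commutator structure to leave non-identity elements in every coset.

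The main obstacle will be case (3). The positive half (no bridge $\Rightarrow$ existence) for $D_8$ and $Q_8$ still follows from the bridgeless construction, since both groups have irreducible dimensions $(1,1,1,1,2)$ and hence strictly positive Mednyh count at every genus. The NP-completeness for bridged maps, however, is of a different flavour and would demand a polynomial-time reduction from a standard NP-hard problem---plausibly 3-SAT or planar 3-colourability---exploiting the fact that $D_8$ and $Q_8$ share character tables yet differ as groups, so that the $G'$-coset assignment on a collection of bridges can be coerced to encode Boolean variables while the nonabelian lift encodes arbitrary Boolean constraints in a way that the coincidence-free groups in (2) cannot. I do not see an elegant shortcut for this reduction and would expect to follow DeVos's original construction closely rather than attempt a new one.
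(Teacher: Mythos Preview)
The paper does not prove this theorem at all: it is quoted verbatim from DeVos's thesis~\cite{D00} as contextual background immediately after Theorem~\ref{thm:n1flows}, with no argument supplied. There is therefore no ``paper's own proof'' against which to compare your proposal.

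That said, a few comments on the sketch itself. Your necessity argument for (1) is cleaner than you present it: once you restrict a putative nowhere-identity local $G$-flow of $M$ to the plane side $P\cup\{e\}$, you have a nowhere-identity local $G$-flow of a \emph{plane} map containing a bridge, and Example~\ref{prop:plane_local_flows} gives $q_G(P\cup\{e\})=\phi(\Gamma;|G|)=0$ directly---there is no need to talk about $\phi(e)\in G'$ or ``cosets whose only element is $1$'', and indeed that part of your write-up is muddled. For the sufficiency side, the spanning-quasi-tree construction is plausible for the bridgeless base case, but your appeal to Proposition~\ref{prop:genus_xy} only produces a quasi-tree, and ``freely assign non-identity values to $E\setminus T$ and solve along $T$'' does not obviously avoid identity values on the tree edges; the Mednyh count being positive tells you local $G$-flows exist on a one-vertex bouquet, not that the boundary-evaluation map you describe is surjective onto $G\setminus\{1\}$. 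The gluing step in (2) and the NP-completeness in (3) are, as you acknowledge, essentially deferred to DeVos. If you actually want to prove this theorem you will need to consult~\cite{D00}; the present paper offers no shortcut.
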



\section{Concluding remarks}\label{sec:conclusions}

\subsection{The surface Tutte polynomial and the Kruskhal polynomial}\label{sec:not_Kruskal}

Consider the following two maps $M_1$ and $M_2$ in the $2$-torus (orientable surface of genus $2$). 
The map $M_2$ is a $2$-vertex $5$-edge map constructed by adding a pendant edge to the $4$-loop graph on one vertex in the $2$-torus. $M_1$ is the unique map in the $2$-torus in which each of the two vertices is attached to a non-loop edge and to two loops. Both $M_1$ and $M_2$ have a non-loop edge ($e_1$ and $e_2$ respectively) that is a bridge in their underlying graph; we have 
$M_1/e_1=M_2/e_2$. 

By direct computation we find:
\begin{align}
\mathcal{T}&(M_1;\mathbf{x},\mathbf{y})=
{x}^{5}x_{{2}}{y_{{0}}}^{2}+{x}^{4}x_{{2}}y_{{0}}+4\,{x}^{4}x_{{1}}y{y
_{{0}}}^{2} 
+4\,{x}^{3}x_{{1}}yy_{{0}}+2\,{x}^{3}x_{{1}}{y}^{2}y_{{0}}y
_{{1}}+2\,{x}^{2}x_{{1}}{y}^{2}y_{{1}}\nonumber \\
&+4\,{x}^{3}x_{{0}}{y}^{2}{y_{{0}
}}^{2}+4\,{x}^{2}x_{{0}}{y}^{2}y_{{0}}+4\,{x}^{2}x_{{0}}{y}^{3}y_{{1}}
y_{{0}}
+4\,xx_{{0}}{y}^{3}y_{{1}}+xx_{{0}}{y}^{4}{y_{{1}}}^{2}+x_{{0}}
{y}^{4}y_{{2}}
\end{align}
and 
\begin{align}
\mathcal{T}&(M_2;\mathbf{x},\mathbf{y})=
{x}^{5}x_{{2}}{y_{{0}}}^{2}+{x}^{4}x_{{2}}y_{{0}}+4\,{x}^{4}x_{{1}}y{y
_{{0}}}^{2}+4\,{x}^{3}x_{{1}}yy_{{0}}+2\,{x}^{3}x_{{1}}{y}^{2}y_{{0}}y
_{{1}}+2\,{x}^{2}x_{{1}}{y}^{2}y_{{1}}\nonumber\\ &+4\,{x}^{3}x_{{0}}{y}^{2}{y_{{0}
}}^{2}+4\,{x}^{2}x_{{0}}{y}^{2}y_{{0}}+4\,{x}^{2}x_{{0}}{y}^{3}y_{{1}}
y_{{0}}+4\,xx_{{0}}{y}^{3}y_{{1}}+xx_{{0}}{y}^{4}y_{{0}}y_{{2}}+x_{{0}
}{y}^{4}y_{{2}},\end{align}
which are different as:
\[\mathcal{T}(M_1;\mathbf{x},\mathbf{y})-\mathcal{T}(M_2;\mathbf{x},\mathbf{y})=
xx_{{0}}{y}^{4}{y_{{1}}}^{2}-xx_{{0}}{y}^{4}y_{{0}}y_{{2}}.\]

Using~\cite[Lemma~2.2 (2)]{K11} for edges $e_1$ and $e_2$ in $M_1$ and $M_2$, respectively, together with the observation that $M_1/e_1=M_2/e_2$, we conclude that $\mathcal{K}(M_1;x,y,a,b)=\mathcal{K}(M_2;x,y,a,b)$. Therefore, the surface Tutte polynomial $\mathcal T(M;\mathbf x,\mathbf y)$ distinguishes these two maps, whereas the Kruskhal polynomial does not (and hence neither does the Las Vergnas polynomial nor the Bollob\'as--Riordan polynomial). Thus, by Proposition~\ref{prop:specs}, $\mathcal T(M;\mathbf x,\mathbf y)$ strictly refines the partition on maps induced by the Kruskhal polynomial. 

Since $\mathcal{T}(M;\mathbf{x},\mathbf{y})$ has $4+2g(M)$ variables, this latter fact that the distinguishing power of $\mathcal{T}(M;\mathbf{x},\mathbf{y})$ (partition of maps into equivalence classes according to the value of their surface Tutte polynomial) refines that of $\mathcal K(M;x,y,a,b)$ is perhaps unsurprising when $g(M)>0$. 
On the other hand, by Proposition~\ref{prop:surface_Tutte_plane}, given two plane maps, the two underlying graphs have the same Tutte polynomial if and only if the two plane maps have the same surface Tutte polynomial (a polynomial in 4 variables). 


In light of the specializations of Section~\ref{sec:specializations}, it seems natural to consider the $4$-variable specialization
 $\mathcal Q(M;x,y,a,b)$ of the surface Tutte polynomial (setting $x_g=a^g$, $y_g=b^g$ for $g=0,1,\dots$) and ask for its distinguishing power with respect to the Kruskhal polynomial. As yet we have no counterxample to following:
\begin{problem}\label{prob:QK} Let $M$ and $M'$ be maps. 
Is it true that
 $$\mathcal Q(M;x,y,a,b)=\mathcal Q(M';x,y,a,b)\quad\mbox{if and only if}\quad \mathcal K(M;x,y,a,b)=\mathcal K(M';x,y,a,b)\:?$$
\end{problem}

\subsection{Non-orientable surfaces}
It is natural to ask whether $\mathcal{T}(M;\mathbf{x},\mathbf{y})$ can be extended from maps to graphs embedded in non-orientable surfaces in such a way that there are evaluations of this extended polynomial which enumerate nowhere-identity local $G$-flows and nowhere-identity local $G$-tensions (the definition of which extend to non-orientable embeddings). 
The paper~\cite{MY05} contains results that suggest a way forward 
and we expect to report on this in a future paper.

\subsection{Deletion/contraction recursion}

The Tutte polynomial is universal for graph invariants satisfying a deletion-contraction recursion for ordinary edges, loops and bridges~\cite{OW79}: together with the boundary condition that it takes the value $1$ on edgeless graphs this recurrence determines the polynomial.
 
The Bollob\' as--Riordan polynomial and Krushkal both satisfy a similar recurrence, except reduction is only for ordinary edges and bridges, with boundary conditions determined by their value on bouquets~\cite{BR01, K11}. 

What about other specializations of the surface Tutte polynomial, such as $\mathcal{Q}(M;a,y,a,b)$?

As we have already remarked, contraction of loops in maps does not behave in the same way as for graphs.
For example, taking two loops $e_1,e_2$ attached to a single vertex $v$ embedded in the torus (so the vertex rotation at $v$ is given by $(e_1,e_2,e_1,e_2)$), contracting a loop results in a plane map consisting of two vertices connected by an edge. 
If this same graph is embedded in the plane (so the vertex rotation at $v$ is given by $(e_1,e_1,e_2,e_2)$), then contracting a loop results in a the disjoint union of a vertex and an edge, a disconnected plane map.

In subsequent work we aim to determine what sort of deletion-contraction recurrence is satisfied by $\mathcal{T}(M;\mathbf{x},\mathbf{y})$, $\mathcal{Q}(M;a,y,a,b)$ and other polynomials derived from the surface Tutte polynomial, and to describe the boundary conditions for them. For example, for the number of nowhere-identity local $G$-flows of $M$, $q_G(M)$, it is straightforward to see that $q_G(M)$ satisfies a recurrence for non-loops and, after establishing a four-term relation for $q_G(M)$ on bouquets, that its value is determined by its value on bouquets whose chord diagram is of the form $D_{i,j}$ in the notation of~\cite[Lemma 5]{BR01}.

\section*{Acknowledgements}
We thank Matt DeVos for sending us a copy of his thesis.

\bibliographystyle{plain}
\bibliography{SurfaceTutte}

\begin{thebibliography}{10}

\bibitem{ACEMS13}
S.~Askanazi, R.and~Chmutov, C.~Estill, J.~Michel, and P.~Stollenwerk.
\newblock Polynomial invariants of graphs on surfaces.
\newblock {\em Quantum Topol.}, 4(1):77--90, 2013.

\bibitem{B93}
N.~Biggs.
\newblock {\em Algebraic graph theory}.
\newblock Cambridge Mathematical Library. Cambridge Univ. Press, Cambridge,
  second edition, 1993.

\bibitem{B98}
B.~Bollob{\'a}s.
\newblock {\em Modern graph theory}, volume 184 of {\em Graduate Texts in
  Mathematics}.
\newblock Springer-Verlag, New York, 1998.

\bibitem{BR01}
B.~Bollob{\'a}s and O.~Riordan.
\newblock A polynomial invariant of graphs on orientable surfaces.
\newblock {\em Proc. London Math. Soc. (3)}, 83(3):513--531, 2001.

\bibitem{BR02}
B.~Bollob{\'a}s and O.~Riordan.
\newblock A polynomial of graphs on surfaces.
\newblock {\em Math. Ann.}, 323(1):81--96, 2002.

\bibitem{B89}
A.~Bouchet.
\newblock Maps and {$\triangle$}-matroids.
\newblock {\em Discrete Math.}, 78(1-2):59--71, 1989.

\bibitem{CKS11}
A.~Champanerkar, I.~Kofman, and N.~Stoltzfus.
\newblock Quasi-tree expansion for the {B}ollob\'as-{R}iordan-{T}utte
  polynomial.
\newblock {\em Bull. Lond. Math. Soc.}, 43(5):972--984, 2011.

\bibitem{CMNR16}
C.~Chun, I.~Moffatt, S.~Noble, and R.~Rueckeriemen.
\newblock Matroids, delta-matroids and embedded graphs.
\newblock arXiv:1403.0920 [math.CO], 2016.

\bibitem{D00}
M.~J. DeVos.
\newblock {\em Flows on Graphs}.
\newblock PhD thesis, Princeton Univ., November 2000.

\bibitem{EMM11}
J.~A. Ellis-Monaghan and C.~Merino.
\newblock Graph polynomials and their applications {I}: {T}he {T}utte
  polynomial.
\newblock In {\em Structural analysis of complex networks}, pages 219--255.
  Birkh\"auser/Springer, New York, 2011.

\bibitem{EMM13}
J.A. Ellis-Monaghan and I.~Moffatt.
\newblock {\em Graphs on surfaces}.
\newblock Springer Briefs in Mathematics. Springer, New York, 2013.
\newblock Dualities, polynomials, and knots.

\bibitem{EMM15}
J.A. Ellis-Monaghan and I.~Moffatt.
\newblock The {L}as {V}ergnas polynomial for embedded graphs.
\newblock {\em European J. Combin.}, 50:97--114, 2015.

\bibitem{F1896}
G.~Frobenius.
\newblock {{{\"{U}}}}ber {Gruppencharaktere}.
\newblock {\em Sitzber. K\"oniglich Preuss Akad. Wiss. Berlin}, pages
  985--1021, 1896.

\bibitem{FH91}
W.~Fulton and J.~Harris.
\newblock {\em Representation theory}, volume 129 of {\em Graduate Texts in
  Mathematics}.
\newblock Springer-Verlag, New York, 1991.
\newblock A first course, Readings in Mathematics.

\bibitem{H02}
A.~Hatcher.
\newblock {\em Algebraic topology}.
\newblock Cambridge University Press, Cambridge, 2002.

\bibitem{J98}
G.A. Jones.
\newblock Characters and surfaces: a survey.
\newblock In {\em The atlas of finite groups: ten years on ({B}irmingham,
  1995)}, volume 249 of {\em London Math. Soc. Lecture Note Ser.}, pages
  90--118. Cambridge Univ. Press, Cambridge, 1998.

\bibitem{K02}
M.~Kochol.
\newblock Polynomials associated with nowhere-zero flows.
\newblock {\em J. Combin. Theory Ser. B}, 84(2):260--269, 2002.

\bibitem{K11}
V.~Krushkal.
\newblock Graphs, links, and duality on surfaces.
\newblock {\em Combin. Probab. Comput.}, 20(2):267--287, 2011.

\bibitem{LZ04}
S.K. Lando and A.K. Zvonkin.
\newblock {\em Graphs on surfaces and their applications}, volume 141 of {\em
  Encyclopaedia of Mathematical Sciences}.
\newblock Springer-Verlag, Berlin, 2004.
\newblock With an appendix by Don B. Zagier, Low-Dimensional Topology, II.

\bibitem{LV78}
M.~Las~Vergnas.
\newblock Sur les activit\'es des orientations d'une g\'eom\'etrie
  combinatoire.
\newblock {\em Cahiers Centre \'Etudes Rech. Op\'er.}, 20(3-4):293--300, 1978.
\newblock Colloque Math{\'e}matiques Discr{\`e}tes: Codes et Hypergraphes
  (Brussels, 1978).

\bibitem{LV80}
M.~Las~Vergnas.
\newblock On the {T}utte polynomial of a morphism of matroids.
\newblock {\em Ann. Discrete Math.}, 8:7--20, 1980.
\newblock Combinatorics 79 (Proc. Colloq., Univ. Montr{\'e}al, Montreal, Que.,
  1979), Part I.

\bibitem{M78}
A.~D. Mednyh.
\newblock Determination of the number of nonequivalent coverings over a compact
  {R}iemann surface.
\newblock {\em Dokl. Akad. Nauk SSSR}, 239(2):269--271, 1978.

\bibitem{MY05}
J.~T. Mulase, M.and~Yu.
\newblock Non-commutative matrix integrals and representation varieties of
  surface groups in a finite group.
\newblock {\em Ann. Inst. Fourier (Grenoble)}, 55(6):2161--2196, 2005.

\bibitem{NW99}
S.~D. Noble and D.~J.~A. Welsh.
\newblock A weighted graph polynomial from chromatic invariants of knots.
\newblock {\em Ann. Inst. Fourier (Grenoble)}, 49(3):1057--1087, 1999.
\newblock Symposium {\`a} la M{\'e}moire de Fran{\c{c}}ois Jaeger (Grenoble,
  1998).

\bibitem{OW79}
J.~G. Oxley and D.~J.~A. Welsh.
\newblock The {T}utte polynomial and percolation.
\newblock In {\em Graph theory and related topics ({P}roc. {C}onf., {U}niv.
  {W}aterloo, {W}aterloo, {O}nt., 1977)}, pages 329--339. Academic Press, New
  York-London, 1979.

\bibitem{Ser}
J.-P. Serre.
\newblock {\em Linear representations of finite groups}, volume~42.
\newblock Springer Science \& Business Media, 2012.

\bibitem{T49}
W.~T. Tutte.
\newblock On the imbedding of linear graphs in surfaces.
\newblock {\em Proc. London Math. Soc. (2)}, 51:474--483, 1949.

\bibitem{T54}
W.~T. Tutte.
\newblock A contribution to the theory of chromatic polynomials.
\newblock {\em Canadian J. Math.}, 6:80--91, 1954.

\bibitem{T47}
W.T. Tutte.
\newblock A ring in graph theory.
\newblock {\em Proc. Cambridge Philos. Soc.}, 43:26--40, 1947.

\bibitem{T04}
W.T. Tutte.
\newblock Graph-polynomials.
\newblock {\em Adv. in Appl. Math.}, 32(1-2):5--9, 2004.
\newblock Special issue on the Tutte polynomial.

\bibitem{WF11}
Y.~Watanabe and K.~Fukumizu.
\newblock New graph polynomials from the {B}ethe approximation of the {I}sing
  partition function.
\newblock {\em Combin. Probab. Comput.}, 20(2):299--320, 2011.

\bibitem{W93}
D.~J.~A. Welsh.
\newblock {\em Complexity: knots, colourings and counting}, volume 186 of {\em
  London Mathematical Society Lecture Note Series}.
\newblock Cambridge Univ. Press, Cambridge, 1993.

\bibitem{W99}
D.~J.~A. Welsh.
\newblock The {T}utte polynomial.
\newblock {\em Random Structures Algorithms}, 15(3-4):210--228, 1999.
\newblock Statistical physics methods in discrete probability, combinatorics,
  and theoretical computer science (Princeton, NJ, 1997).

\bibitem{W32a}
H.~Whitney.
\newblock Non-separable and planar graphs.
\newblock {\em Trans. Amer. Math. Soc.}, 34(2):339--362, 1932.

\end{thebibliography}
\end{document}